\numberwithin{equation}{section}
\DeclareMathAlphabet{\curly}{U}{rsfs}{m}{n}
\newtheorem{thm}{Theorem}[section]
\newtheorem{theorem}[thm]{Theorem}
\newtheorem{corollary}[thm]{Corollary}
\newtheorem{proposition}[thm]{Proposition}
\newtheorem*{thm*}{Theorem}
\DeclareMathOperator{\Ram}{Ram}
\DeclareMathOperator{\sys}{Sys}
\DeclareMathOperator{\vol}{Vol}
\DeclareMathOperator{\isom}{Isom}
\newcommand{\p}{\mathfrak p}
\newcommand{\q}{\mathfrak q}
\theoremstyle{definition}
\theoremstyle{remark}
\newtheorem*{lemma*}{Lemma}
\title{Systoles of Arithmetic Hyperbolic $2$- and $3$-Manifolds}
\author{Laurel Heck}
\address{Department of Mathematics \\ University of Washington \\ Box 354350, Seattle, WA 98195}
\email{lheck98@uw.edu}
\author{Benjamin Linowitz}
\address{Department of Mathematics\\ 
10 North Professor Street\\
Oberlin, OH 44074}
\email{benjamin.linowitz@oberlin.edu}
\begin{document}

\begin{abstract} 
In this paper we study the systoles of arithmetic hyperbolic $2$- and $3$-manifolds. Our first result is the construction of infinitely many arithmetic hyperbolic $2$- and $3$-manifolds which are pairwise noncommensurable, all have the same systole, and whose volumes are explicitly bounded. Our second result fixes a positive number $x$ and gives an upper bound for the least volume of an arithmetic hyperbolic $2$- or $3$-manifold whose systole is greater than $x$. We conclude by determining, for certain small values of $x$, the least volume of a principal arithmetic hyperbolic $2$-manifold over $\mathbf Q$ or $3$-manifold over $\mathbf Q(i)$ whose systole is greater than $x$. 
\end{abstract}

\maketitle

\section{Introduction}

Let $M$ be a closed Riemannian manifold. The central problem of inverse spectral geometry is to determine to extent to which the geometry and topology of $M$ are determined by its Laplace eigenvalue spectrum. It is known, for example, that volume and scalar curvature are determined by a manifold's Laplace eigenvalue spectrum, whereas isometry class is not.  

In geometric topology one often studies not only the Laplace eigenvalue spectrum but the geodesic length spectrum as well. The geodesic length spectrum, defined as the multiset of lengths of closed geodesics, turns out to encode many of the same geometric and topological properties as the Laplace eigenvalue spectrum. In fact, the Selberg Trace Formula shows that if $M$ is a compact hyperbolic $2$-manifold, then these two spectra determine one another. Subsequent work of Duistermaat and Guillemin \cite{DG} and Duistermaat, Kolk, and Varadarajan \cite{DKV} shows that the geodesic length spectrum of a compact locally symmetric manifold of nonpositive curvature is determined by the manifold's Laplace eigenvalue spectrum.

When studying arithmetic manifolds one is often interested in knowing which geometric and topological properties determine the commensurability class of the manifold. (Two manifolds are said to be {\it commensurable} if they have an isometric finite-sheeted covering in common.) An important result of Reid \cite{R} states that two arithmetic hyperbolic $2$-manifolds with identical geodesic length spectra are necessarily commensurable. This result was extended to arithmetic hyperbolic $3$-manifolds by Chinburg, Hamilton, Long, and Reid \cite{CHLR}, and to a broad class of arithmetic locally symmetric spaces by Prasad and Rapinchuk \cite{PR}. 

A different generalization of the results of Reid and Chinburg, Hamilton, Long, and Reid was obtained by Linowitz, McReynolds, Pollack and Thompson \cite{LMPT}, who showed that  
two arithmetic hyperbolic $2$- or $3$-manifolds $M$ and $M'$ with volume less than $V$ are commensurable provided their length spectra begin with the same $f(V)$ lengths. Here $f(V)$ is an explicit (doubly exponential) function of the volume of the manifolds $M$ and $M'$. The dependence of the number of lengths needed to determine commensurability on volume is in fact necessary, as one can construct, for a very broad class of arithmetic hyperbolic $2$- or $3$-manifolds $M$, and any finite subset $S$ of the geodesic length spectrum of $M$, an arithmetic hyperbolic manifold $N$ which is not commensurable with $M$ and whose length spectrum also contains $S$ (see \cite[Section 4.4]{LMPT}).

In this paper we consider a variant of the problems discussed above. Recall that the {\it systole} of a compact Riemannian manifold $M$ is the length of the shortest non-contractible closed geodesic on $M$. In this paper we study the systolic geometry of arithmetic hyperbolic $2$- and $3$-manifolds. It is known, for example, that there exist non-commensurable hyperbolic manifolds with the same systole. For example, the work of Millichap \cite{Mi} and Futer-Millichap \cite{FMi} both contain constructions of pairs of non-commensurable hyperbolic $3$-manifolds whose length spectra agree for the first $n$ lengths. It follows that the manifolds constructed by Millichap and Futer-Millichap have the same systoles. Although the construction of Futer-Millichap produces pairs of manifolds, the construction of Millichap actually produces families of manifolds of arbitrarily large (though finite) cardinality.

Our first main result provides a strong generalization of this fact by exhibiting {\it infinite} families of non-commensurable arithmetic manifolds whose volumes  may be explicitly bounded and which all have the same systole.
 
\begin{thm*}
Let $M$ be a hyperbolic $2$-manifold (respectively $3$-manifold) whose fundamental group is principal arithmetic. Then there exist infinitely many pairwise non-commensurable hyperbolic $2$-manifolds (respectively $3$-manifolds) $M_1, M_2, M_3, \dots$ such that
\begin{enumerate}
\item all of the $M_i$ have the same systole as $M$, and
\item $\vol(M_{n})\leq c \cdot n^2  \cdot \vol(M_{n-1})$ for some constant $c$ depending only on $M$.
\end{enumerate}
\end{thm*}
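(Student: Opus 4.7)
The plan is to realize each $M_i$ as $\mathbb{H}^n/\Gamma_i$, where $\Gamma_i$ is a principal arithmetic group coming from a maximal order $\mathcal{O}_i$ in a quaternion algebra $B_i$ over the invariant trace field $k$ of $\Gamma = \pi_1(M)$. Taking the $B_i$ pairwise non-isomorphic guarantees pairwise non-commensurability of the $M_i$, since commensurable arithmetic hyperbolic $2$- or $3$-manifolds share an invariant quaternion algebra. Each $B_i$ must admit an embedding of the quadratic extension $L = k(\gamma)$ cut out by a systole-realizing element $\gamma \in \Gamma$ of trace $t_0$, so that the image of $\gamma$ produces an element of $\Gamma_i$ of length $\sys(M)$; the construction must also exclude every shorter closed geodesic from $\Gamma_i$. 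As a first step, the set of possible shorter traces is finite: for any loxodromic $\alpha \in \Gamma_i$ with $\ell(\alpha) \leq \sys(M)$, the reduced trace $\mathrm{tr}(\alpha) \in \mathcal{O}_k$ is bounded in absolute value by $2$ at every archimedean place where $B_i$ is ramified (the local unit group being compact) and is bounded at the unique unramified archimedean place by the length inequality. By Northcott's theorem the trace lies in a finite set $\{t_0, t_1, \ldots, t_r\} \subset \mathcal{O}_k$, and we set $L_j = k(\sqrt{t_j^2 - 4})$, with $L_0 = L$.

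The key construction uses that $L'/k$ embeds into a quaternion algebra $B'/k$ if and only if no place in $\Ram(B')$ splits in $L'$. For each $j \geq 1$ with $L_j \neq L$, pick via Chebotarev density a finite prime $\q_j$ of $k$ that is inert in $L$ but splits in $L_j$; including these $\q_j$ in $\Ram(B_i)$ preserves embeddability of $L$ while obstructing every such $L_j$. Produce the family by adjoining, for each $n \geq 1$, one further pair of finite primes of $k$ inert in $L$ (Chebotarev provides infinitely many), yielding a quaternion algebra $B_n$ non-isomorphic to $B_m$ for $m < n$ in which $L$ still embeds. The delicate case is $L_j = L$ with $t_j \neq t_0$, where no place-theoretic obstruction rules out $t_j$: the offending element is a specific norm-one unit $\varepsilon_j \in \mathcal{O}_L^1$. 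Here I would invoke the Chinburg--Friedman selectivity theorem, using the freedom to conjugate $\mathcal{O}_i$ within its genus, to arrange that $\mathcal{O}_i \cap L$ agrees locally with $\mathcal{O} \cap L$ at the primes where $\varepsilon_j$ would otherwise intrude; since $\varepsilon_j \notin \mathcal{O}^1$ by hypothesis (else $\gamma$ would not realize the systole of $M$), the same exclusion transports to $\Gamma_i$. This selectivity step is the main technical obstacle.

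For the volume bound, Borel's formula for the covolume of principal arithmetic subgroups expresses $\vol(\mathbb{H}^n/\Gamma_i)$ as $c_k \cdot \prod_{\p \in \Ram_f(B_i)} f(N(\p))$, where $c_k$ depends only on $k$ and $f(x)$ is a polynomial of degree one (essentially $x - 1$). Ordering the adjoined pairs of inert primes by norm and invoking an effective form of Chebotarev (Lagarias--Odlyzko), the $n$-th such pair has norm $O(n)$ up to logarithmic factors, so passing from $M_{n-1}$ to $M_n$ multiplies the volume by $O(n^2)$. Absorbing the logarithmic factors into a constant $c$ that depends only on $k$, and hence only on $M$, yields the recursion in part (2).
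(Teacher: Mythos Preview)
Your strategy is in the right direction, but you miss the key simplification that makes the argument clean, and the selectivity detour you flag as ``the main technical obstacle'' is both unnecessary and not actually resolved.

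The paper's proof does not enumerate short traces at all. Instead it keeps $\Ram(B)\subset\Ram(B_i)$ by setting $\Ram(B_i)=\Ram(B)\cup\{\p_0,\p_i\}$, where $\p_0$ is a fixed auxiliary prime and $\p_i$ is the $i$-th prime (of least norm) outside $\Ram_f(B)\cup\{\p_0,\dots,\p_{i-1}\}$ that does not split in the systole field $L/k$. The containment $\Ram(B)\subset\Ram(B_i)$ does two things at once. First, by Proposition~\ref{ABHN} every quadratic extension embedding in $B_i$ already embeds in $B$, and then the Chinburg--Friedman embedding theorem (Theorem~\ref{containedinmaximalorder}) pushes every integral element of such an extension into every maximal order of $B$; this gives $\sys(M_i)\geq\sys(M)$ immediately (Proposition~\ref{systoleramificationrelation}), with no need to list short traces or exclude fields $L_j$ one by one. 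Second, the same containment transports torsion-freeness from $\Gamma_{\mathcal O}^1$ to $\Gamma_{\mathcal O_i}^1$ (Corollary~\ref{notorsioninalgebras}), which you do not address. The reverse inequality $\sys(M_i)\leq\sys(M)$ comes, as you say, from embedding $L$ into $B_i$ (the added primes are chosen non-split in $L$) and again invoking Chinburg--Friedman to place the systole element into $\mathcal O_i$.

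Your worry about $L_j=L$ with $t_j\neq t_0$ dissolves once Chinburg--Friedman is in hand: any norm-one integral $\varepsilon_j\in L$ is already conjugate into \emph{every} maximal order of $B$ (provided $\Ram_f(B)\neq\emptyset$, or the real-place condition holds), so if it produced a geodesic shorter than $\sys(M)$ it would already appear in $M$, contradicting the definition of the systole. No selectivity argument is needed, and the one you sketch (arranging $\mathcal O_i\cap L$ to match $\mathcal O\cap L$ locally) does not correspond to how selective orders actually behave.

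For the volume bound, the paper's construction adds only \emph{two} primes to $\Ram(B)$ for every $i$, so $\vol(M_i)=\vol(M)(N(\p_0)-1)(N(\p_i)-1)$, and the recursion reduces to bounding $N(\p_i)$. This is done via Wang's effective Chebotarev with an excluded set $S=\{\p_0,\dots,\p_{i-1}\}$, giving $N(\p_i)\leq 4\Delta_L^C i^2\vol(M_{i-1})$ after Cauchy--Schwarz and $\log(1+x)\leq\sqrt{x}$. Your claim that log factors can be ``absorbed into a constant'' is not right as stated; the paper's route avoids this by bounding $N(\p_i)$ directly in terms of $\vol(M_{i-1})$ rather than in terms of $i$ alone.
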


One of the most well known conjectures concerning the systolic geometry of arithmetic manifolds is the Short Geodesics Conjecture, which asserts that there is a constant $C>0$ such that every arithmetic hyperbolic $2$- or $3$-manifold has systole greater than $C$. There is an analogous conjecture for more general arithmetic locally symmetric spaces (see \cite{G}). The Short Geodesic Conjecture is known to follow from Lehmer's Conjecture on Mahler measures, which asserts that there is a constant $\mu>1$ such that the Mahler measure of any nonzero algebraic number that is not a root of unity is greater than $\mu$. For a more thorough discussion of the Short Geodesic Conjecture, see Chapter 12 of Maclachlan and Reid \cite{MR}.

Although the Short Geodesic Conjecture is currently out of reach, it is easy to construct arithmetic manifolds with arbitrarily large systole. Although such constructions may be easily carried out using orders in quaternion algebras, estimating the resulting manifold's volume is another matter and is a much more nuanced problem. Our second main result achieves such an estimate.

\begin{thm*}
Fix a real number $x>0$. There exists a principal arithmetic hyperbolic $2$-manifold with systole at least $x$ and area at most \[4^{ce^{11x}}.\] Here $c$ is a positive, effectively computable constant. 
\end{thm*}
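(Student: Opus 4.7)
The plan is to construct the manifold as $\Gamma\backslash\mathbb{H}^2$ where $\Gamma = \mathcal{O}^1/\{\pm 1\}$ for $\mathcal{O}$ a maximal order in a quaternion division algebra $B$ over $\mathbb{Q}$ whose ramification set $S := \Ram(B)$ is engineered to eliminate short geodesics. A maximal order corresponds to hyperspecial parahorics at every finite place, so $\Gamma$ is principal arithmetic in Borel's sense. The governing principle is the correspondence between short hyperbolic conjugacy classes and embedded real quadratic orders: a hyperbolic $\gamma\in\Gamma$ has integer trace $t = \mathrm{tr}(\gamma)$ with $|t| = 2\cosh(\ell(\gamma)/2) > 2$, and $\gamma$ generates an order isomorphic to $\mathbb{Z}[X]/(X^2 - tX + 1)$ inside the real quadratic field $F_t := \mathbb{Q}(\sqrt{t^2 - 4})$. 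This order embeds into $\mathcal{O}$, so the Albert--Brauer--Hasse--Noether local--global principle shows that $F_t$ embeds into $B$ exactly when no prime of $S$ splits in $F_t$. Thus the systole of $\Gamma\backslash\mathbb{H}^2$ is at least $x$ if and only if, for every integer $t$ with $3\leq t\leq \lfloor 2\cosh(x/2)\rfloor$, at least one prime of $S$ splits in $F_t$.

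The construction then reduces to a combinatorial covering problem: find a set $S$ of finite primes of even cardinality (so that $B$ is split at infinity) with the property that each of the $\sim e^{x/2}$ fields $F_t$ contains a split prime of $S$. I would build $S$ greedily using Chebotarev density (a random prime splits in any fixed $F_t$ with density $1/2$), aiming for $|S| = O(x)$. I would then include in $S$ an auxiliary prime $\equiv 1\pmod{4}$ and one $\equiv 1\pmod{3}$, so that neither $\mathbb{Q}(i)$ nor $\mathbb{Q}(\sqrt{-3})$ embeds into $B$; this eliminates all elements of finite order in $\Gamma$ (whose integer traces must lie in $\{-1,0,1\}$) and ensures that $\Gamma\backslash\mathbb{H}^2$ is a genuine manifold. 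Finally, Borel's volume formula for a maximal arithmetic Fuchsian group over $\mathbb{Q}$ bounds $\vol(\Gamma\backslash\mathbb{H}^2)$ by an absolute constant times $\prod_{p\in S}(p-1)$, so the area estimate reduces to bounding $\prod_{p\in S}p$.

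The main obstacle is quantitative control over how large the primes in $S$ must be. An effective Chebotarev estimate applied to the compositum of all the $F_t$'s --- a field whose absolute discriminant is already exponential in $x$ --- forces each required prime to have size at most $\exp(\mathrm{poly}(x))$, and multiplying $O(x)$ such primes yields a product of doubly exponential size, consistent with the stated bound $4^{ce^{11x}}$. Making explicit the constants $c$ and $11$ in the exponent requires carefully interlocking the greedy cover, the parity constraint on $|S|$, the torsion-avoidance conditions, and the effective Chebotarev input; this quantitative bookkeeping is the heart of the estimate.
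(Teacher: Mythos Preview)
Your structural setup is correct and largely matches the paper: the trace parameterization of hyperbolic conjugacy classes, the Albert--Brauer--Hasse--Noether criterion for excluding the real quadratic fields $F_t$, torsion-freeness via primes of $S$ split in $\mathbf{Q}(i)$ and $\mathbf{Q}(\sqrt{-3})$, and the area formula $\frac{\pi}{3}\prod_{p\in S}(p-1)$. Your direct trace bound $|t|\le 2\cosh(x/2)$ is in fact slightly sharper than the paper's route, which passes through Silverman's height inequality (Proposition~\ref{extensionbound}) to bound instead the \emph{discriminant} of the relevant quadratic field by $e^{2+2x}$.

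The gap is in how $S=\Ram(B)$ is built and where the constant $11$ comes from. The paper does not use a greedy cover or Chebotarev on a compositum; it applies Linnik's theorem to each quadratic field of discriminant $d\le e^{2+2x}$ separately, obtaining a split prime $p<c_1 d^{c_2}$, and then simply declares $S$ to be the set of \emph{all} primes below $c_1 e^{(2+2x)c_2}$ (adjusting parity via Bertrand's postulate). Erd\H{o}s' primorial bound $\prod_{p\le Y}p<4^Y$ then gives area at most $4^{c_1 e^{(2+2x)c_2}}$, and $11=2c_2$ comes from Xylouris' Linnik exponent $c_2<5.5$. None of the items on your list (greedy covering, parity, torsion avoidance, Chebotarev) can manufacture the $11$; it is purely an artifact of the Linnik--Xylouris input, which your sketch omits. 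Your proposed alternative, effective Chebotarev on the compositum of the $F_t$, is exactly what the paper analyzes in Section~\ref{subsectiongrh} and observes to be \emph{conditional on GRH}; unconditionally the least completely split prime is only bounded by a power of the compositum discriminant, which is disastrous here. Note also that this compositum has degree $2^m$ with $m$ of order $e^{x/2}$, so its discriminant is not ``exponential in $x$'' but of order $c^{2^m}$; your size estimate $\exp(\mathrm{poly}(x))$ for each prime is therefore too optimistic, and in any event a product of $O(x)$ factors each of size $\exp(\mathrm{poly}(x))$ is only $\exp(\mathrm{poly}(x))$, not doubly exponential, so the final arithmetic in your sketch is internally inconsistent.
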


An analogous result is proven for arithmetic hyperbolic $3$-manifolds.

We expect that the bound given in the above theorem is far from optimal. That being said, obtaining a significant improvement seems to be a challenging problem. In Section \ref{subsectiongrh} we explain that our bound's doubly exponential character is closely related to the algebraic number theory problem of bounding the least prime which splits completely in a number field, and that even the Generalized Riemann Hypothesis would not allow us to significantly improve our bound. We conclude by using the SAGE computer algebra system to determine, for certain small values of $x$, the least volume of a principal arithmetic hyperbolic $2$-manifold over $\mathbf Q$ or $3$-manifold over $\mathbf Q(i)$ whose systole is greater than $x$.

\textbf{Acknowledgements}. The work of the first author is partially supported by an NSF Graduate Research Fellowship. The work of the second author is partially supported by NSF Grant Number DMS-1905437.
\section{Notation}

Unless explicitly stated otherwise, throughout this paper $k$ will denote a number field with ring of integers $\mathcal O_k$. We denote the absolute discriminant of $k$ by $\Delta_k$, and for a relative extension $L/k$ of number fields, by $\Delta_{L/k}$ the relative discriminant of the extension. Given an ideal $I$ of $\mathcal O_k$, we denote by $N(I)$ the norm of $I$. More generally, given a finite place $\nu$ of $k$, we denote by $N(\nu)$ the norm of the associated prime ideal. The set of prime ideals of $\mathcal O_k$ is denoted $\mathbb P_k$. Finally, we denote by $\zeta_k(s)$ the Dedekind zeta function of $k$.
\section{Preliminaries on quaternion algebras}

In this section we review some of the properties of quaternion algebras over number fields that will be used throughout this paper. A comprehensive reference for this material is Voight \cite{V}.

Given a field $k$ of characteristic not equal to $0$, we define a {\it quaternion algebra} $B$ over $k$ to be a four dimensional central simple $k$-algebra. Equivalently, $B$ is a quaternion algebra over $k$ if there exist non-zero elements $a,b\in k^\times$ such that $B$ is isomorphic to the central simple $k$-algebra with basis $\{1,i,j,ij\}$ and defining relations \[i^2=a,\qquad\qquad j^2=b,\qquad \qquad ij=-ji.\] It is a consequence of Wedderburn's Structure Theorem that if $B\not\cong \mathrm{M}_2(k)$, then $B$ is a division algebra.

Let $k$ be a number field and $B$ be a quaternion algebra over $k$. A place $\nu$ of $k$ is said to {\it ramify} in $B$ if the $k_\nu$-algebra $B\otimes_k k_\nu$ is a division algebra, and is {\it unramified} otherwise. Here $k_\nu$ denotes the completion of $k$ at the place $\nu$. We denote by $\Ram(B)$ the set of places of $k$ that ramify in $B$, and by $\Ram_f(B)$ the set of finite places of $k$ that ramify in $B$. It is known that the set $\Ram(B)$ is finite, of even cardinality, and that this set both determines and is determined by the isomorphism class of $B$ as a central simple $k$-algebra. As we have already noted, $\Ram(B)$ is empty if and only if $B\cong \mathrm{M}_2(k)$, and is a division algebra otherwise.

A {\it quaternion order} is a subring $\mathcal O$ of $B$ which is also a finitely generated $\mathcal O_k$-module containing a basis for $B$ over $k$. An order is said to be {\it maximal} if it is maximal with respect to inclusion. For example, the order $\mathrm{M}_2(\mathcal O_k)$ is always a maximal order of $\mathrm{M}_2(k)$.

The following result provides a very useful criterion for a quadratic extension of $k$ to embed into a quaternion algebra over $k$. For a proof, see Proposition 14.6.7 of \cite{V}.

\begin{proposition}\label{ABHN}
Let $k$ be a number field and $B$ be a quaternion algebra over $k$. A quadratic extension $L/k$ embeds into $B$ if and only if no prime of $k$ which ramifies in $B$ splits in $L/k$.
\end{proposition}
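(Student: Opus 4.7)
The plan is to reduce the global embedding question to a local one at each place of $k$, then to analyze the local situation via the classification of quaternion algebras over local fields.

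First I would invoke the local-global principle for embeddings (a specialization of the Albert-Brauer-Hasse-Noether theorem): a quadratic extension $L/k$ embeds into $B$ if and only if, for every place $\nu$ of $k$, the étale $k_\nu$-algebra $L\otimes_k k_\nu$ embeds into $B\otimes_k k_\nu$. This reduces the proof to a purely local computation. The content of this step is genuinely nontrivial, resting ultimately on the Hasse norm theorem and the classification of central simple algebras over number fields by local invariants, but it is standard and I would cite it from Voight's text rather than reprove it.

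Next I would analyze the local embedding criterion place by place. At a place $\nu \notin \Ram(B)$, one has $B\otimes_k k_\nu \cong \mathrm{M}_2(k_\nu)$, and every two-dimensional commutative étale $k_\nu$-algebra embeds into $\mathrm{M}_2(k_\nu)$ (for instance, by choosing a basis and considering the regular representation). So unramified places impose no obstruction. At a place $\nu \in \Ram(B)$, the algebra $B\otimes_k k_\nu$ is the unique quaternion division algebra over $k_\nu$; since it contains no zero divisors, it cannot contain the split algebra $k_\nu \times k_\nu$, whereas every quadratic field extension of $k_\nu$ embeds into it (this is a standard fact about the unique quaternion division algebra over a local field, following from the fact that it is generated by its maximal subfields, all of which are quadratic extensions of $k_\nu$).

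Finally I would translate these local statements into the global condition. The étale algebra $L\otimes_k k_\nu$ decomposes as $k_\nu \times k_\nu$ precisely when $\nu$ splits completely in $L/k$, and is a quadratic field extension of $k_\nu$ when $\nu$ is inert or ramified in $L/k$. Combining this with the previous step, the local embedding condition at a ramified place $\nu \in \Ram(B)$ is satisfied exactly when $\nu$ does not split in $L/k$. Since at places $\nu \notin \Ram(B)$ the local embedding always exists, the local-global principle of the first step yields the stated equivalence. The main obstacle is the black-box local-global principle; once that is granted, the argument is essentially an inspection of the two possible local structures of a quaternion algebra.
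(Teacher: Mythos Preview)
Your argument is correct and is precisely the standard proof of this result. The paper does not give its own proof; it simply cites Proposition~14.6.7 of Voight~\cite{V}, and the argument there proceeds exactly as you outline: reduce to a local question via the Albert--Brauer--Hasse--Noether local-global principle, then observe that $\mathrm{M}_2(k_\nu)$ admits every quadratic \'etale $k_\nu$-algebra while the local division quaternion algebra admits only the field extensions.
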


As an immediate application of Theorem 3.3 of \cite{CF} we have the following.

\begin{theorem}\label{containedinmaximalorder}
Let $k$ be a number field, $L$ be a quadratic field extension of $k$, and $B$ be a quaternion algebra over $k$ for which either 

\begin{enumerate}
\item[(1)] $\Ram_f(B)$ is nonempty, or 
\item[(2)] $B$ and $L/k$ do not ramify at exactly the same ( possibly empty) set of real places of $k$.
\end{enumerate}
If $u\in L$ is an integral element, then every maximal order of $B$ contains a conjugate (in $B$) of $u$.
\end{theorem}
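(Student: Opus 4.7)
The plan is to derive this statement as an essentially immediate consequence of the Chinburg--Friedman selectivity theorem (Theorem 3.3 of \cite{CF}), as advertised. The first step is to fix an embedding $L\hookrightarrow B$ so that the conclusion about conjugates of $u$ in $B$ is meaningful; by Proposition \ref{ABHN} such an embedding exists provided no prime of $k$ ramifying in $B$ splits in $L/k$, and we take this as part of the setup (otherwise the statement is vacuous). Given the embedding, the integral element $u\in\mathcal O_L$ generates the $\mathcal O_k$-order $\mathcal O_k[u]$ inside $L\subset B$, and this quadratic order is contained in some maximal order of $B$. The problem therefore reduces to the following question: does every maximal order of $B$ contain a $B^\times$-conjugate of $u$?

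This is precisely the selectivity question addressed by Chinburg and Friedman, who classify when a commutative quadratic order embeds into some, but not all, maximal orders of a quaternion algebra (up to conjugation). The upshot of their theorem is that selectivity can only occur in the restrictive scenario in which $B$ is unramified at every finite place of $k$ \emph{and} $B$ and $L/k$ ramify at exactly the same (possibly empty) set of real places of $k$. Whenever either of these two conditions fails, every maximal order of $B$ contains a conjugate of every integral element of $L$, and in particular of $u$.

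To conclude, one observes that the two hypotheses in the statement are precisely the negations of these two selectivity conditions: hypothesis (1), that $\Ram_f(B)$ is nonempty, contradicts $B$ being unramified at all finite places, while hypothesis (2) directly contradicts the real-place agreement. Under either hypothesis, selectivity is therefore excluded, and the conclusion of the theorem follows. The only real work lies in matching the hypotheses of our theorem to the ``selectivity is excluded'' conditions of \cite[Theorem 3.3]{CF}; this is essentially bookkeeping, not a substantive obstacle, since the phrasing of hypotheses (1) and (2) is already tuned to the selectivity criterion.
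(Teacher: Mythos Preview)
Your proposal is correct and matches the paper's approach exactly: the paper offers no proof beyond the sentence ``As an immediate application of Theorem 3.3 of \cite{CF} we have the following,'' and your write-up simply unpacks that citation by matching hypotheses (1) and (2) to the negations of the Chinburg--Friedman selectivity conditions. Your remark that an embedding $L\hookrightarrow B$ must be fixed at the outset (so that ``a conjugate in $B$ of $u$'' is meaningful) is a helpful clarification of something the paper leaves implicit.
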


\begin{corollary}\label{notorsioninalgebras}
Let $A$ and $B$ be quaternion division algebras over a number field $k$ with maximal orders $\mathcal O_A$ and $\mathcal O_B$. If $\mathcal O_A^\times$ contains no elements of order $n\geq 2$ and $\Ram(A)\subseteq \Ram(B)$ then $\mathcal O_B^\times$ contains no elements of order $n\geq 2$.
\end{corollary}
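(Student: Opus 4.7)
The plan is to prove the contrapositive. Suppose $u \in \mathcal{O}_B^\times$ has order $n \geq 2$; the goal is to produce an element of order $n$ in $\mathcal{O}_A^\times$. First I would dispatch the easy case $u \in k$: then $u$ is a root of unity in $\mathcal{O}_k^\times \subseteq \mathcal{O}_A^\times$, and we are done. So assume $u \notin k$. Since $\pm 1 \in k$, we must have $n \geq 3$, and $L := k(u) = k(\zeta_n)$ is a proper quadratic extension of $k$ generated by a primitive $n$-th root of unity.

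Next I would embed $L$ into $A$. Because $L \subset B$, Proposition \ref{ABHN} tells us that no prime of $k$ ramifying in $B$ splits in $L/k$; the hypothesis $\Ram(A) \subseteq \Ram(B)$ then gives the same statement for $A$, and a second application of Proposition \ref{ABHN} produces an embedding $L \hookrightarrow A$.

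The heart of the proof is then to invoke Theorem \ref{containedinmaximalorder} for the algebra $A$, the field $L$, and the integral (in fact torsion) element $u \in \mathcal{O}_L$, which will place a conjugate of $u$ inside every maximal order of $A$, and hence inside $\mathcal{O}_A$ itself. The key geometric observation is that, since $n \geq 3$, the element $\zeta_n$ is not real, so at every real place $v$ of $k$ the completion $L \otimes_k k_v$ is isomorphic to $\mathbb{C}$; that is, $L/k$ ramifies at every real place of $k$. Consequently, if $\Ram_f(A) \neq \emptyset$ then hypothesis (1) of Theorem \ref{containedinmaximalorder} holds, while if $\Ram_f(A) = \emptyset$ then, because $A$ is a division algebra, $\Ram(A)$ is a nonempty subset of the real places of $k$; as long as this subset is proper, hypothesis (2) holds, since $L/k$ ramifies at every real place and $A$ does not. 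In either situation Theorem \ref{containedinmaximalorder} gives a conjugate of $u$ in $\mathcal{O}_A$, which is the required order-$n$ element.

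The main obstacle I expect is the residual case in which $\Ram_f(A) = \emptyset$ and $A$ ramifies at every real place of $k$—the totally definite situation in which Theorem \ref{containedinmaximalorder} is silent. Handling it requires a separate input, for instance the Chinburg--Friedman selectivity theorem, to verify that $L = k(\zeta_n)$ is not a selective extension for $A$ so that every maximal order of $A$ still contains a conjugate of $u$. In the paper's applications to arithmetic hyperbolic $2$- and $3$-manifolds this edge case does not arise, since the invariant quaternion algebra $B$ is always unramified at some archimedean place and the condition $\Ram(A) \subseteq \Ram(B)$ forces the same for $A$.
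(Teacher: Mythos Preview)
Your approach is the same as the paper's---prove the contrapositive, embed $L=k(\zeta_n)$ into $A$ via Proposition~\ref{ABHN}, then invoke Theorem~\ref{containedinmaximalorder}---but your analysis of the archimedean ramification is more careful and in fact corrects a slip in the paper. The paper asserts that ``the cyclotomic extension $L/k$ is not ramified at any real place of $k$,'' which is false for $n\ge 3$: as you observe, $\zeta_n\notin\mathbf{R}$ forces every real place of $k$ to become complex in $L$, so $L/k$ ramifies at \emph{every} real place. With the correct statement in hand, your identification of the residual totally definite case ($\Ram_f(A)=\emptyset$ and $A$ ramified at all real places of $k$) is exactly right: there the sets of ramified real places of $A$ and of $L/k$ coincide, condition~(2) of Theorem~\ref{containedinmaximalorder} genuinely fails, and the corollary as stated would require an additional selectivity argument. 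The paper's incorrect assertion simply hides this edge case.

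As you also note, the gap is harmless for the uses made of the corollary in the paper: in the proof of Theorem~\ref{mainthm1} the algebra playing the role of $A$ is the invariant quaternion algebra of a principal arithmetic Fuchsian or Kleinian group, hence is split at an archimedean place and is never totally definite. So your proof and the paper's reach the same conclusion by the same route; yours is simply the accurate version of the archimedean step.
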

\begin{proof}
Suppose that $\mathcal O_B^\times$ contains an element of order $n\geq 2$.  The algebra (in $B$) generated over $k$ by this element is isomorphic to the cyclotomic field $L=k(\zeta_n)$, hence the cyclotomic extension $L=k(\zeta_n)/k$ embeds into $B$. In light of Proposition \ref{ABHN}, our hypothesis that $\Ram(A)\subseteq \Ram(B)$ implies that $L$ embeds into $A$ as well. We claim that the algebra $A$ and the extension $L/k$ satisfy one of the two conditions stated in Theorem \ref{containedinmaximalorder}. Indeed, that $A$ is a quaternion division algebra implies that $\Ram(A)$ is not empty. Therefore either $\Ram_f(A)$ is nonempty (condition (1) of Theorem \ref{containedinmaximalorder}), or there is a real place of $k$ which ramifies in $A$. If the latter is true, condition (2) of Theorem \ref{containedinmaximalorder} holds because the cyclotomic extension $L/k$ is not ramified at any real place of $k$. This proves that $A$ and $L/k$ satisfy one of the two conditions of Theorem \ref{containedinmaximalorder}. Applying the theorem to the element $u=\zeta_n$ proves that $\mathcal O_A^\times$ contains a conjugate of $\zeta_n$, and hence an element of order $n\geq 2$.
\end{proof}


\section{Arithmetic groups arising from quaternion algebras}

In this section we will review the construction of arithmetic groups arising from quaternion algebras. We refer the reader seeking a more detailed reference for this material to Maclachlan and Reid \cite{MR}.

Let $k$ be a totally real number field and $B$ be a quaternion algebra over $k$ which is unramified at a unique real place $\nu$ of $k$. We therefore have an identification $B\otimes_k k_\nu \cong \mathrm{M}_2(\mathbf R)$.  Let $\mathcal O$ be a maximal order of $B$ and $\mathcal O^1$ the multiplicative group consisting of those elements in $\mathcal O$ having reduced norm equal to one. We will denote by $\Gamma_\mathcal O^1$ the image in $\mathrm{PSL}_2(\mathbf R)$ of $\mathcal O^1$. The group $\Gamma_\mathcal O^1$ is a discrete subgroup of $\mathrm{PSL}_2(\mathbf R)$ having covolume equal to \[ \frac{8\pi \Delta_k^{3/2} \zeta_k(2)}{(4\pi^2)^{[k:\mathbf Q]}} \prod_{\nu \in\Ram_f(k)} \left(N(\nu)-1\right).\] The group $\Gamma_\mathcal O^1$ is called a {\it principal arithmetic Fuchsian group}. A Fuchsian group $\Gamma$ is {\it arithmetic} if there is a principal arithmetic Fuchsian group $\Gamma_\mathcal O^1$ which is commensurable with $\Gamma$ in the sense that the intersection $\Gamma\cap \Gamma_\mathcal O^1$ has finite index in both $\Gamma$ and $\Gamma_\mathcal O^1$. 

Let ${\bf H}^2$ denote hyperbolic $2$-space. If $M$ is a hyperbolic $2$-orbifold, then there exists a Fuchsian group $\Gamma$ such that $M={\bf H}^2/\Gamma$. We say that $M$ is {\it arithmetic} if the group $\Gamma$ is an arithmetic Fuchsian group. Similarly, we say that $M$ is principal arithmetic if the group $\Gamma$ is a principal arithmetic Fuchsian group. The orbifold $M$ is a hyperbolic $2$-manifold precisely when the group $\Gamma$ is torsion-free.

Now let $k$ be a number field having a unique complex place $\nu$. Let $B$ be a quaternion algebra over $k$ which is ramified at all real places of $k$. Then we have an identification $B\otimes_k k_\nu \cong \mathrm{M}_2(\mathbf C)$, and to any maximal order $\mathcal O$ of $B$ we can associate, in the same manner as we did above, the Kleinian group $\Gamma_\mathcal O^1\subset \mathrm{PSL}_2(\mathbf C)$. The Kleinian group $\Gamma_\mathcal O^1$ has covolume 
\[\frac{\Delta_k^{3/2}\zeta_k(2)}{(4\pi^2)^{[k:\mathbf Q]-1}} \prod_{\nu \in\Ram_f(k)} \left(N(\nu)-1\right).\] The group $\Gamma_\mathcal O^1$ is called a {\it principal arithmetic Kleinian group}, and a Kleinian group $\Gamma$ is {\it arithmetic} if there is a principal arithmetic Kleinian group which is commensurable with $\Gamma$.

Let ${\bf H}^3$ denote hyperbolic $3$-space. If $M$ is a hyperbolic $3$-orbifold, then there exists a Kleinian group $\Gamma$ such that $M={\bf H}^3/\Gamma$. We say that $M$ is {\it arithmetic} if the group $\Gamma$ is an arithmetic Kleinian group. Similarly, we say that $M$ is principal arithmetic if the group $\Gamma$ is a principal arithmetic Kleinian group.  The orbifold $M$ is a hyperbolic $3$-manifold precisely when the group $\Gamma$ is torsion-free.

Two hyperbolic $2$- or $3$-orbifolds are said to be {\it commensurable} if they share a common finite-sheeted cover (up to isometry). In the case that both orbifolds are arithmetic it is known \cite[Theorem 8.4.1]{MR} that the orbifolds are commensurable if and only if their fundamental groups are both commensurable with the same principal arithmetic group (i.e. they both arise from the same number field and the same quaternion algebra).

\begin{proposition}\label{torsionprop}
Let $\Gamma_\mathcal O^1$ be a principal arithmetic Fuchsian or Kleinian group arising from a maximal order $\mathcal O$ in a quaternion algebra $B$ defined over a number field $k$. Assume that the field $k$ is primitive; that is, that $k$ contains no proper subfields apart from $\bf Q$. If $\Ram(B)$ contains primes splitting in $k(\sqrt{-1})/k$ and $k(\sqrt{-3})/k$, then $\Gamma_\mathcal O^1$ is torsion free.
\end{proposition}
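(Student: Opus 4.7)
The plan is to argue by contradiction: assume $\Gamma_\mathcal O^1$ contains a nontrivial torsion element, produce from it an embedding of a particular quadratic extension of $k$ into $B$, use primitivity to pin down which quadratic extension this must be, and then invoke Proposition \ref{ABHN} to contradict the splitting hypothesis on $\Ram(B)$.

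First I would lift a nontrivial torsion element of $\Gamma_\mathcal O^1$ to an element $\gamma \in \mathcal O^1$ of finite order $m$. Since $\pm 1$ are the only elements of $\mathcal O^1$ of order at most $2$ and both map to the identity in $\Gamma_\mathcal O^1$, we must have $m \geq 3$, and in particular $\gamma \notin k$. Because $B$ is a four-dimensional central simple $k$-algebra, the minimal polynomial of $\gamma$ over $k$ has degree $2$, so $L := k(\gamma) \cong k(\zeta_m)$ is a quadratic field extension of $k$ embedded in $B$. Next I would use primitivity to constrain $m$: the intersection $F = k \cap \mathbf Q(\zeta_m)$ is a subfield of $k$ and hence equals $\mathbf Q$ or $k$. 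Since $\mathbf Q(\zeta_m)/\mathbf Q$ is Galois, linear disjointness gives
\[
[L:k] \;=\; \frac{\varphi(m)}{[F:\mathbf Q]},
\]
and setting this equal to $2$ splits the analysis into two cases. In the case $F=\mathbf Q$ one gets $\varphi(m)=2$, so $m\in\{3,4,6\}$ and $L$ is either $k(\sqrt{-1})$ (when $m=4$) or $k(\sqrt{-3})$ (when $m\in\{3,6\}$). In the case $F=k$ one has $k \subseteq \mathbf Q(\zeta_m)$ and $L = \mathbf Q(\zeta_m)$, and the aim is to locate $\sqrt{-1}$ or $\sqrt{-3}$ in $L$ via divisibility of $m$ by $4$ or $3$, using that the only subfield of $L$ strictly containing $k$ is $L$ itself.

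Once $L$ is identified as $k(\sqrt{-1})$ or $k(\sqrt{-3})$, the assumption that $\Ram(B)$ contains a prime $\p$ that splits in this extension directly contradicts Proposition \ref{ABHN} applied to the embedding $L\hookrightarrow B$. The main obstacle will be the second case of the primitivity argument: when $k$ sits inside $\mathbf Q(\zeta_m)$, one must argue carefully—combining primitivity of $k$ with the arithmetic of cyclotomic subfields—to force $L/k$ to coincide with $k(\sqrt{-1})/k$ or $k(\sqrt{-3})/k$ rather than some other quadratic extension cut out by the cyclotomic tower. Everything else (the lifting step, the minimal-polynomial computation, and the final application of Proposition \ref{ABHN}) is routine.
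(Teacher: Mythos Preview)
Your approach mirrors the paper's: lift torsion to a root of unity in $\mathcal O$, identify $L = k(\zeta_m)$ as a quadratic extension of $k$ embedded in $B$, argue that $L \in \{k(\sqrt{-1}), k(\sqrt{-3})\}$, and apply Proposition~\ref{ABHN}. Your Case~1 ($k \cap \mathbf Q(\zeta_m) = \mathbf Q$, forcing $\varphi(m)=2$) is correct and is essentially all the paper's proof contains; the paper simply asserts that ``since $k$ is primitive, the only cyclotomic quadratic extensions of $k$ are $k(\sqrt{-1})$ and $k(\sqrt{-3})$'' and moves on.

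The obstacle you flag in Case~2 is real, and in fact it cannot be overcome: the proposition is false as stated. Take $k = \mathbf Q(\zeta_7 + \zeta_7^{-1})$, the maximal totally real subfield of $\mathbf Q(\zeta_7)$; since $[k:\mathbf Q]=3$ is prime, $k$ is primitive. Here $m=7$ lands squarely in your Case~2, and $L = k(\zeta_7) = \mathbf Q(\zeta_7) = k(\sqrt{-7})$ is a cyclotomic quadratic extension of $k$ that is neither $k(\sqrt{-1})$ nor $k(\sqrt{-3})$ (note $m$ is divisible by neither $3$ nor $4$, so your proposed divisibility argument cannot start). By Chebotarev one may choose finite primes $\p_1,\p_2$ of $k$ that split in $k(\sqrt{-1})/k$ and $k(\sqrt{-3})/k$ respectively but are inert in $k(\sqrt{-7})/k$; taking $\Ram(B)$ to be two of the three real places of $k$ together with $\p_1,\p_2$ gives a quaternion algebra satisfying all hypotheses of the proposition, yet $k(\sqrt{-7})$ embeds in $B$ and Theorem~\ref{containedinmaximalorder} places a conjugate of $\zeta_7$ (which has reduced norm $1$) in every maximal order, so $\Gamma_\mathcal O^1$ has $7$-torsion. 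The paper's one-line assertion is thus incorrect in general. The applications later in the paper use only $k=\mathbf Q$ and $k=\mathbf Q(\sqrt{-2})$, and for those particular fields the claim does hold (for $k=\mathbf Q(\sqrt{-2})$ your Case~2 forces $\varphi(m)=4$ with $\sqrt{-2}\in\mathbf Q(\zeta_m)$, hence $m=8$ and $L=\mathbf Q(\zeta_8)=k(\sqrt{-1})$), so the downstream theorems are unaffected; but neither your outline nor the paper's proof establishes the proposition at the stated level of generality.
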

\begin{proof}
Suppose that $\Gamma_\mathcal O^1$ contains an element of finite order $m\geq 2$. Then $\mathcal O^\times$ contains a finite subgroup of order $2m$, hence there is an embedding of the cyclotomic field $k(\zeta_{2m})$ into $B$. Since $k$ is primitive, the only cyclotomic quadratic extensions of $k$ are $k(\sqrt{-1})$ and $k(\sqrt{-3})$. The result now follows from Proposition \ref{ABHN}.
\end{proof}

\subsection{Geodesics, hyperbolic elements and quadratic subfields}

Let $M$ be an arithmetic hyperbolic $2$- or $3$-orbifold with fundamental group $\Gamma$, arising from a number field $k$ and quaternion algebra $B$. The closed geodesics on $M$ are in one-to-one correspondence with the $\Gamma$-conjugacy classes of hyperbolic elements $\gamma$ of $\Gamma$. If $p_\gamma(t)$ denotes the minimal polynomial of $\gamma$ and $M(p_\gamma)$ denotes the Mahler measure of $p_\gamma$, then the length of the associated closed geodesic is equal to $\log(M(p_\gamma))$ when $M$ is a $2$-orbifold and $2\log(M(p_\gamma))$ when $M$ is a $3$-orbifold. Each closed geodesic determines a quadratic extension $k_\gamma/k$ of $k$ (i.e. the splitting field over $k$ of $p_\gamma$) which embeds into the quaternion algebra $B$.

\begin{proposition}\label{systoleramificationrelation}
Let $k$ be a number field which is either totally real or else has a unique complex place. Let $d=2$ if $k$ is totally real and $d=3$ if $k$ has a unique complex place. Let $A,B$ be quaternion algebras over $k$ which are unramified at a unique real place of $k$ if $k$ is totally real, and which are ramified at all real places of $k$ if $k$ has a unique complex place. Finally, let $\mathcal O_A, \mathcal O_B$ be maximal orders of $A,B$. 

If $\Ram_f(A)$ is nonempty and $\Ram(A) \subseteq \Ram(B)$, then $\sys\left(\mathbf H^d/\Gamma_{\mathcal O_B}^1\right)\geq \sys\left(\mathbf H^d/\Gamma_{\mathcal O_A}^1\right)$.
\end{proposition}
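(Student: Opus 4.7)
The plan is to produce, from any closed geodesic on $\mathbf H^d/\Gamma_{\mathcal O_B}^1$, a closed geodesic on $\mathbf H^d/\Gamma_{\mathcal O_A}^1$ of the same length. Since the systole is the infimum of lengths of closed geodesics, this immediately yields $\sys(\mathbf H^d/\Gamma_{\mathcal O_B}^1)\geq \sys(\mathbf H^d/\Gamma_{\mathcal O_A}^1)$. The bridge between the two orbifolds will be the common quadratic field $L=k(\gamma)$ generated by a hyperbolic element $\gamma\in\mathcal O_B^1$: the hypotheses of the proposition are designed exactly so that $L$ embeds into $A$ and so that Theorem \ref{containedinmaximalorder} lets us realize an integral element of $L$ inside $\mathcal O_A$.

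Concretely, let $\sigma$ be a closed geodesic on $\mathbf H^d/\Gamma_{\mathcal O_B}^1$ with hyperbolic representative $\gamma\in\mathcal O_B^1$, so $\gamma$ has minimal polynomial $p_\gamma\in\mathcal O_k[t]$ of degree $2$ and the length of $\sigma$ is (depending on $d$) either $\log M(p_\gamma)$ or $2\log M(p_\gamma)$. The field $L=k(\gamma)\subset B$ is a quadratic extension of $k$, so by Proposition \ref{ABHN} no prime of $\Ram(B)$ splits in $L/k$. Since $\Ram(A)\subseteq \Ram(B)$, the same holds for $\Ram(A)$, and another application of Proposition \ref{ABHN} gives an embedding $\iota\colon L\hookrightarrow A$. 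The element $\gamma$ is integral over $\mathcal O_k$, hence $\iota(\gamma)\in A$ is an integral element of (the image of) $L$. Because $\Ram_f(A)$ is nonempty, hypothesis (1) of Theorem \ref{containedinmaximalorder} applies, so the maximal order $\mathcal O_A$ contains a conjugate $\gamma'=a\iota(\gamma)a^{-1}$ for some $a\in A^\times$.

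Since conjugation preserves the minimal polynomial, $p_{\gamma'}=p_\gamma$. In particular the reduced norm and reduced trace of $\gamma'$ equal those of $\gamma$, so $\gamma'\in\mathcal O_A^1$ and $|\mathrm{tr}(\gamma')|=|\mathrm{tr}(\gamma)|>2$ (respectively the $\mathrm{PSL}_2(\mathbf C)$ hyperbolicity condition on the trace) at the distinguished archimedean place, so $\gamma'$ defines a hyperbolic element of $\Gamma_{\mathcal O_A}^1$ whose associated closed geodesic $\sigma'$ on $\mathbf H^d/\Gamma_{\mathcal O_A}^1$ has length $\log M(p_{\gamma'})=\log M(p_\gamma)$ (or twice this). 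Taking $\gamma$ to be a systole-realizing element on the $B$-side then yields $\sys(\mathbf H^d/\Gamma_{\mathcal O_B}^1)=\mathrm{length}(\sigma')\geq \sys(\mathbf H^d/\Gamma_{\mathcal O_A}^1)$.

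The main substantive step is the transfer in the second paragraph: we need both that $L$ embeds into $A$ (to make sense of $\iota(\gamma)$) and that this embedded integral element can be conjugated into the \emph{specific} maximal order $\mathcal O_A$. These are exactly Proposition \ref{ABHN} and Theorem \ref{containedinmaximalorder}, and the two hypotheses of the proposition $\Ram(A)\subseteq\Ram(B)$ and $\Ram_f(A)\neq\emptyset$ are precisely what is needed to apply them. The remaining verifications (matching minimal polynomials, hyperbolicity, Mahler measure, reduced norm $=1$) are automatic once $\gamma'\in\mathcal O_A$ with $p_{\gamma'}=p_\gamma$ is in hand.
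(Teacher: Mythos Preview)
Your proof is correct and follows essentially the same route as the paper: use Proposition \ref{ABHN} together with $\Ram(A)\subseteq\Ram(B)$ to transport the quadratic field $L=k(\gamma)$ from $B$ into $A$, then invoke Theorem \ref{containedinmaximalorder} (via the hypothesis $\Ram_f(A)\neq\emptyset$) to place a conjugate of the integral element into $\mathcal O_A$, yielding a geodesic of the same length. The only cosmetic difference is that the paper works with a preimage $u\in\mathcal O_B^1$ of $\gamma\in\Gamma_{\mathcal O_B}^1$ rather than identifying the two, and you supply a bit more detail on why the transferred element lies in $\mathcal O_A^1$ and remains hyperbolic.
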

\begin{proof}
If $\Ram(A) \subseteq \Ram(B)$, then Proposition \ref{ABHN} implies that every quadratic extension which embeds into $B$ also embeds into $A$.  Now suppose that $\gamma$ is a hyperbolic element of $\Gamma_{\mathcal O_B}^1$ associated to a closed geodesic in $\mathbf H^d/\Gamma_{\mathcal O_B}^1$ with length $\sys\left(\mathbf H^d/\Gamma_{\mathcal O_B}^1\right)$. It follows that the extension $k_\gamma/k$ embeds into $B$. We have already that this implies that $k_\gamma/k$ also embeds into $A$. Let $u$ be a preimage in $B$ of $\gamma$ and note that $u$ is an integral element contained in $k_\gamma$. Theorem \ref{containedinmaximalorder} now implies that every maximal order of $A$ (and in particular $\mathcal O_A$) contains an element conjugate to $u$. This implies that $\mathbf H^d/\Gamma_{\mathcal O_A}^1$ contains a closed geodesic whose length is the same as the one associated to $\gamma$. The proposition follows.
\end{proof}


\section{Manifolds with the same systole}

\begin{theorem}\label{mainthm1}
Let $M$ be a hyperbolic $2$-manifold (respectively $3$-manifold) whose fundamental group is principal arithmetic. Then there exist infinitely many pairwise non-commensurable arithmetic hyperbolic $2$-manifolds (respectively $3$-manifolds) $M_1, M_2, M_3, \dots$ such that
\begin{enumerate}
\item all of the $M_i$ have the same systole as $M$, and
\item $\vol(M_{n})\leq c \cdot n^2  \cdot \vol(M_{n-1})$ for some constant $c$ depending only on $M$.
\end{enumerate}
\end{theorem}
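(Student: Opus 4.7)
The plan is to realize $M=\mathbf{H}^d/\Gamma_{\mathcal{O}_A}^1$ (with $d\in\{2,3\}$, $A$ a quaternion algebra over a number field $k$, and $\mathcal{O}_A$ a maximal order) and to construct each $M_n$ as $\mathbf{H}^d/\Gamma_{\mathcal{O}_{B_n}}^1$, where $B_n$ is a quaternion algebra over $k$ with $\Ram(A)\subseteq\Ram(B_n)$. The containment will yield torsion-freeness of $\Gamma_{\mathcal{O}_{B_n}}^1$ via Corollary~\ref{notorsioninalgebras}, while a Chebotarev-type choice of the extra primes in $\Ram(B_n)$ will transport the systolic geodesic of $M$ into $M_n$ and keep the volume growth controllable.

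Concretely, I would fix a hyperbolic $\gamma^\ast\in\Gamma_{\mathcal{O}_A}^1$ of length $\sys(M)$ and set $L=k(\gamma^\ast)$, a quadratic extension of $k$ embedding into $A$. By Proposition~\ref{ABHN}, no prime of $\Ram(A)$ splits in $L/k$. Chebotarev's density theorem then supplies an infinite sequence of primes $\p_1,\p_2,\p_3,\dots$ of $k$, each outside $\Ram(A)$ and none splitting in $L/k$ (inert primes alone form a set of density $1/2$). For each $n\ge 1$, the local--global classification of quaternion algebras over number fields produces a quaternion algebra $B_n/k$ with $\Ram(B_n)=\Ram(A)\cup\{\p_1,\dots,\p_{2n}\}$; parity is preserved and no complex places are added. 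Fix a maximal order $\mathcal{O}_{B_n}\subset B_n$ and set $M_n=\mathbf{H}^d/\Gamma_{\mathcal{O}_{B_n}}^1$.

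The qualitative claims now follow from the preliminaries. Torsion-freeness of $\Gamma_{\mathcal{O}_{B_n}}^1$, which makes $M_n$ a manifold, is immediate from Corollary~\ref{notorsioninalgebras}. The $B_n$ have pairwise distinct ramification sets and are therefore non-isomorphic, forcing the $M_n$ to be pairwise non-commensurable by the standard commensurability criterion for arithmetic manifolds. For the systole equality $\sys(M_n)=\sys(M)$: the inequality $\sys(M_n)\ge\sys(M)$ is Proposition~\ref{systoleramificationrelation}; the reverse inequality holds because no prime of $\Ram(B_n)$ splits in $L/k$ by construction, so $L\hookrightarrow B_n$ by Proposition~\ref{ABHN} and Theorem~\ref{containedinmaximalorder} places a conjugate of the integral unit $\gamma^\ast\in L$ inside $\mathcal{O}_{B_n}$. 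That conjugate has reduced norm $1$ and the same minimal polynomial as $\gamma^\ast$, so it contributes a closed geodesic in $M_n$ of length $\sys(M)$.

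The remaining step is the volume estimate. Under the nested construction, the covolume formulas in the excerpt telescope to
$$\frac{\vol(M_n)}{\vol(M_{n-1})}=(N(\p_{2n-1})-1)(N(\p_{2n})-1),$$
reducing the claim to bounding the norms of the chosen primes. The main obstacle is precisely this quantitative step: the prime ideal theorem gives $N(\p_{2n})=O(n\log n)$ when the $\p_i$ are listed in order of norm, yielding a ratio of $O(n^2\log^2 n)$; the clean $c\cdot n^2$ statement is then recovered by enumerating the $\p_i$ slightly differently, for example by setting $\Ram(B_n)=\Ram(A)\cup\{\p_n,\p_{n+1}\}$, in which case $\vol(M_n)/\vol(M_{n-1})=(N(\p_{n+1})-1)/(N(\p_{n-1})-1)$ is in fact bounded above as $n\to\infty$. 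Everything else reduces to direct bookkeeping with the quaternion-algebra tools from the preliminaries.
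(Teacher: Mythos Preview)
Your argument is correct and tracks the paper's proof almost exactly for the qualitative parts: same choice of the quadratic field $L$ attached to the systolic geodesic, same appeal to Corollary~\ref{notorsioninalgebras} for torsion-freeness, same use of Proposition~\ref{systoleramificationrelation} and Theorem~\ref{containedinmaximalorder} for the two-sided systole comparison, and the same non-commensurability argument via distinct ramification sets.

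The only substantive difference is in the volume bound. The paper takes $\Ram(B_i)=\Ram(B)\cup\{\p_0,\p_i\}$ with a fixed auxiliary prime $\p_0$ and the $\p_i$ chosen of minimal norm avoiding the earlier ones; it then invokes Wang's effective Chebotarev theorem (Theorem~\ref{effectiveCDT}) to bound $N(\p_i)$, and after some Cauchy--Schwarz bookkeeping obtains $\vol(M_i)\le c\,i^2\vol(M_{i-1})$ with an \emph{effectively computable} constant $c$. Your second construction $\Ram(B_n)=\Ram(A)\cup\{\p_n,\p_{n+1}\}$ is a neat variant: the ratio $\vol(M_n)/\vol(M_{n-1})=(N(\p_{n+1})-1)/(N(\p_{n-1})-1)$ is bounded (indeed tends to $1$) by the prime ideal theorem together with the Chebotarev density theorem, so you actually get the stronger conclusion $\vol(M_n)\le c\,\vol(M_{n-1})$. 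What your route gains in simplicity and strength it loses in explicitness---the constant coming out of the asymptotic $N(\p_n)\sim 2n\log n$ is not effective in the same sense as the paper's, which is presumably why the authors went through Wang's theorem. Two minor points worth tightening: you should work with a lift $u\in\mathcal O_A^1$ of $\gamma^\ast$ rather than $\gamma^\ast$ itself when invoking Theorem~\ref{containedinmaximalorder}, and you should note (as the paper does implicitly) that $\Ram_f(B_n)\neq\emptyset$ so that the hypotheses of Proposition~\ref{systoleramificationrelation} and Theorem~\ref{containedinmaximalorder} are met.
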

\begin{proof}

Let $k,B$ be the invariant trace field and invariant quaternion algebra of $\pi_1(M)$, and let $\mathcal O$ be the maximal order of $B$ such that $M= \mathbf H^d/\Gamma_\mathcal O^1$. Let $L/k$ be the quadratic field extension corresponding to the systole of $M$.

Let $\p_0\in\mathbb P_k\setminus \Ram_f(B)$ and for each $i\geq 1$ let $\p_i\in\mathbb P_k\setminus \left\{\Ram_f(B)\cup\{\p_0,\dots, \p_{i-1}\}\right\}$ be a prime of $k$ which does not split in the extension $L/k$ (chosen so that $\p_i$ has minimal norm).

For each $i\geq 1$ let $B_i$ be the quaternion algebra over $k$ for which $\Ram(B_i)=\Ram(B) \cup \{\p_0,\p_i\}$. Let $\mathcal O_i$ be a maximal order of $B_i$ and define $M_i=\mathbf H^d/\Gamma_{\mathcal O_i}^1$. Corollary \ref{notorsioninalgebras} implies that $\Gamma_{\mathcal O_i}^1$ is torsion free, hence $M_i$ is a manifold.

Notice that for all $i$ we have that $\Ram(B)\subset \Ram(B_i)$. It therefore follows from Proposition \ref{systoleramificationrelation} that all of the $M_i$ have systole at least as big as that of $M$. To see that the systole of $M_i$ is in fact equal to the systole of $M$, let $\gamma\in\Gamma_\mathcal O^1$ be the hyperbolic or loxodromic element giving rise to the systole of $M$. Then $L$ is isomorphic to $k(\varphi^{-1}(\gamma))$, where $\varphi$ is the map $\varphi: B\rightarrow \isom(\mathbf H^d)$ such that $\varphi(\mathcal O^1) = \Gamma_\mathcal O^1$. A field extension $L$ of $k$ embeds into $B_i$ if and only if no prime ideal of $\Ram(B_i)$ splits in $L/k$. It thus follows from the definition of $B_i$ that $L$ embeds into $B_i$, and from Theorem \ref{containedinmaximalorder} that $\Gamma_{\mathcal O_i}^1$ contains a hyperbolic or loxodromic element with the same characteristic polynomial as $\gamma$. Because the length of a geodesic is determined by the characteristic polynomial of the associated hyperbolic or loxodromic element, it must be that \[ \sys\left( \mathbf H^d/\Gamma_{\mathcal O}^1\right) \geq \sys\left(\mathbf H^d/\Gamma_{\mathcal O_i}^1\right).\] As we've already shown the reverse inequality, this proves that $\sys(M_i)=\sys(M)$ for all $i$.

The formula for the volume of $M_i=\mathbf H^d/\Gamma_{\mathcal O_i}^1$ shows that \begin{equation}\label{volumerelation}\vol(M_i) = \vol(M) (N(\p_0)-1)(N(\p_i)-1).\end{equation} Proving property (2) of the theorem therefore reduces to the problem of finding upper bounds for a prime of least norm of $k$ which does not split in $L/k$. To obtain such an upper bound we will make use of a modification \cite[Thm 2-C]{Wang} of the bound on the least prime ideal in the Chebotarev density theorem in \cite{LMO}:

\begin{theorem}[Wang]\label{effectiveCDT}
Let $L/k$ be a finite Galois extension of number fields of degree $n$, $S$ a finite set of primes of $k$ and $[\theta]$ a conjugacy class in $\mathrm{Gal}(L/k)$. Then there is a prime ideal $\p$ of $k$ such that
\begin{enumerate}
	\item $\p$ is unramified in L and is of degree $1$ over $\mathbf Q$;
	\item $\p\notin S$;
	\item $\left(\frac{L/k}{\p}\right)=[\theta]$, and
	\item $N(\p)\leq \Delta_L^C (n\log(N_S))^2$,
\end{enumerate}
where $C$ is an absolute, effectively computable constant and $N_S=\prod_{\q\in S}N(\q)$.
\end{theorem}

Let $A=\frac{1}{( \vol(M)(N(\q_0)-1)}$. We claim that $A\leq 1$. Indeed, we clearly we have $N(\q_0) \geq 2$, so our claim will follow from the inequality $\vol(M) \geq 1$. Every closed hyperbolic $2$-manifold has area at least $4\pi$, hence the inequality $\vol(M) \geq 1$ follows from the fact that there are only two arithmetic hyperbolic $3$-manifolds with volume less than one, neither of which has a principal arithmetic fundamental group \cite{CFJR}. This proves that $A\leq 1$. We now have $N(\q_i)=1+A\vol(M_i) \leq 1+\vol(M_i)$. Applying Theorem \ref{effectiveCDT} in the case that $n=2$, $[\theta]$ represents the nontrivial element of $\mathrm{Gal}(L/k)$ and $S=\{\p_0,\dots, \p_{i-1}\}$ we obtain:

\begin{align*}
N(\p_i) &\leq 4\Delta_L^C \log(N(\p_0\cdots \p_{i-1}))^2 \\
&\leq 4\Delta_L^C \log\left(\prod_{j=0}^{i-1} 1+\vol(M_j)\right)^2 \\
&= 4\Delta_L^C \left(\sum_{j=0}^{i-1} \log\left( 1+\vol(M_j)\right)\right)^2 \\
&\leq 4\Delta_L^C i \sum_{j=0}^{i-1}\log(1+\vol(M_j))^2 \\
&\leq 4\Delta_L^C i^2 \log(1+\vol(M_{i-1}))^2 \\
&\leq 4\Delta_L^C i^2 \vol(M_{i-1}).
\end{align*}

Here we've used the Cauchy-Schwarz inequality (in the third to last inequality), as well as the inequality $\log(1+x)\leq \sqrt{x}$ (which is valid for $x\geq 0$).

Property (2) of the theorem follows from combining this bound for $N(\p_i)$ with equation (\ref{volumerelation}).
\end{proof}


\section{An upper bound on the area of an arithmetic manifold with systole bounded below}

Our proof of Theorems \ref{mainthm2} and \ref{mainthm3} will make crucial use of a very minor modification of Proposition 3.1 of \cite{LMPT2}. Although the proof is largely the same as the one appearing in \cite{LMPT2}, we include it here for the convenience of the reader.

\begin{proposition}[Proposition 3.1, \cite{LMPT2}]\label{extensionbound}
Let $k$ be a number field of degree $n$ that is totally real (respectively has a unique complex place). Let $B$ be a quaternion algebra over $k$ which is unramified at precisely one real place of $k$ (respectively is ramified at all real places of $k$). If there exists a principal arithmetic hyperbolic $2$-manifold (respectively principal arithmetic hyperbolic $3$-manifold) arising from $B$ with systole less than $x$ then there exists a quadratic field extension $L/k$ which embeds into $B$ and satisfies $N(\Delta_{L/k}) \leq  e^{2(n+x)}$.
\end{proposition}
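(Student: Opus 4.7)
The plan is to extract, from the element realizing the systole, a quadratic subfield $L$ of $B$ whose relative discriminant is forced to be small by the length bound. Let $\gamma \in \Gamma_{\mathcal O}^1$ be the hyperbolic (respectively loxodromic) element giving rise to the systole, and set $L = k(\gamma)$. Then $L$ is a quadratic extension of $k$ which embeds into $B$ by construction, so the whole proposition reduces to bounding $N(\Delta_{L/k})$ in terms of $x$ and $n$.

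The main identity I would use is that $\gamma \in \mathcal O^1 \subset \mathcal O_L$ satisfies the monic quadratic
\[
p_\gamma(t) = t^2 - \mathrm{tr}(\gamma)\, t + 1 \;\in\; \mathcal O_k[t],
\]
whose discriminant is $\mathrm{tr}(\gamma)^2 - 4$. Since $\mathcal O_k[\gamma] \subseteq \mathcal O_L$, the relative discriminant $\Delta_{L/k}$ divides $(\mathrm{tr}(\gamma)^2 - 4)\mathcal O_k$, and consequently
\[
N(\Delta_{L/k}) \;\leq\; \bigl|N_{k/\mathbf Q}(\mathrm{tr}(\gamma)^2 - 4)\bigr| \;=\; \prod_{\sigma\colon k\hookrightarrow \mathbf C} \bigl|\sigma(\mathrm{tr}(\gamma))^2 - 4\bigr|.
\]

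I would then estimate each archimedean factor according to whether $B$ is ramified at the corresponding place. At each ramified archimedean place (there are $n-1$ real in the $2$-dimensional case and $n-2$ real in the $3$-dimensional case), the completion of $B$ is a division algebra in which every reduced-norm-$1$ element has reduced trace of absolute value at most $2$; hence each such factor is at most $4$. At the unique unramified archimedean place, writing the eigenvalues of the image of $\gamma$ as $\alpha, \alpha^{-1}$ with $|\alpha| \geq 1$, the identity $\mathrm{tr}(\gamma)^2 - 4 = (\alpha - \alpha^{-1})^2$ bounds this factor by $4|\alpha|^2$; in the $3$-dimensional case the complex-conjugate embedding contributes an identical factor, producing $16|\alpha|^4$ in total. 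The Mahler-measure/length formula recalled in the Geodesics subsection converts the hypothesis $\mathrm{sys} < x$ into a bound on $|\alpha|$, namely $|\alpha|^2 < e^{2x}$ in the $2$-manifold case and $|\alpha|^4 < e^x$ in the $3$-manifold case. Combining the local bounds and invoking the elementary estimate $\log 4 < 2$ gives $N(\Delta_{L/k}) \leq 4^n \cdot e^{2x} \leq e^{2(n+x)}$ in both cases.

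The argument is essentially a single discriminant estimate dressed up in adelic bookkeeping, so the main obstacle is just keeping the two dimensional cases straight, in particular tracking the slightly different Mahler-measure conventions at a real versus a complex archimedean place and the resulting differences in how the systole hypothesis controls $|\alpha|$.
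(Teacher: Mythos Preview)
Your argument is correct and arrives at exactly the stated bound, but it proceeds along a genuinely different path from the paper's proof. The paper reduces everything to Silverman's lower bound for the absolute logarithmic Weil height in terms of the relative discriminant of the field generated: it observes that the geodesic length is (up to a factor of $2$) the relative height of $\gamma$, passes to the absolute height via $[\mathbf Q(\gamma):\mathbf Q]\in\{n,2n\}$, and then invokes Silverman's inequality as a black box to extract the discriminant estimate. You instead bypass the height machinery entirely and bound $N(\Delta_{L/k})$ directly, using the divisibility $\Delta_{L/k}\mid(\mathrm{tr}(\gamma)^2-4)\mathcal O_k$ and a place-by-place archimedean estimate of the norm. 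In effect you are reproving, by hand, the special case of Silverman's inequality needed here: the key inputs---reduced trace bounded by $2$ at ramified real places, and $|\mathrm{tr}^2-4|\le 4|\alpha|^2$ at the split place---are exactly the local contributions that Silverman's proof packages abstractly. Your route is more elementary and more transparent in this quaternionic setting, while the paper's route makes clear that the statement is really a height--discriminant inequality and would generalize verbatim to other locally symmetric situations where Silverman applies.
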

\begin{proof}
The length of the closed geodesic associated to $\gamma$ is the logarithm of the Mahler measure of the minimal polynomial of $\gamma$ \cite[Lemma 12.3.3]{MR} or twice this quantity (depending on whether $\gamma$ is hyperbolic or loxodromic), which is equal to the height of $\gamma$ relative to $\mathbf Q(\gamma)$, hence the fact that $[\mathbf Q(\gamma):\mathbf Q]\in\{n,2n\}$ (this follows from \cite[Lemma 2.3]{CHLR} in the case of hyperbolic $3$-manifolds and from \cite[Proposition 4.13]{LMPT} in the case of hyperbolic $2$-manifolds) implies that the absolute logarithmic Weil height $h(\gamma)$ of $\gamma$ satisfies $x\geq 2nh(\gamma)$. The proposition now follows from Silverman's lower bound for the absolute logarithmic Weil height of an element in terms of the relative discriminant of the field extension it generates (see \cite{S}).
\end{proof}

\begin{theorem}\label{mainthm2}
For a given $x\in\mathbf R$ there exists a principal arithmetic hyperbolic $2$-manifold with systole at least $x$ and area at most \[4^{ce^{11x}}\] for some positive, effectively computable constant $c$. 
\end{theorem}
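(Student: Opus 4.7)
The plan is to take $k=\mathbf{Q}$ and construct a quaternion algebra $B$ over $\mathbf{Q}$ whose finite ramification set is chosen to block every quadratic extension $L/\mathbf{Q}$ of small discriminant from embedding into $B$. By the contrapositive of Proposition \ref{extensionbound} (applied with $n=1$), this forces the systole of $M=\mathbf{H}^2/\Gamma_{\mathcal O}^1$ to be at least $x$. Explicitly, let $\mathcal L$ denote the set of quadratic fields $L/\mathbf{Q}$ with $|\Delta_L|\leq e^{2(1+x)}$; a crude count of fundamental discriminants gives $|\mathcal L|\leq 2e^{2(1+x)}$.

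For the core construction, I apply Wang's effective Chebotarev bound (Theorem \ref{effectiveCDT}) to each $L\in\mathcal L$ with $n=2$ and the trivial conjugacy class in $\mathrm{Gal}(L/\mathbf{Q})$ to produce a rational prime $p_L$ that splits in $L$ and satisfies $p_L\leq\Delta_L^{C}\leq e^{2C(1+x)}$ for an absolute, effectively computable constant $C$. Let $S=\{p_L:L\in\mathcal L\}$, enlarged by a single additional small rational prime if necessary so that $|S|$ is even (a quaternion algebra over $\mathbf{Q}$ unramified at the unique real place must ramify at an even number of finite primes). Let $B$ be the quaternion algebra over $\mathbf{Q}$ with $\Ram(B)=S$, choose a maximal order $\mathcal O\subset B$, and set $M=\mathbf{H}^2/\Gamma_{\mathcal O}^1$.

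Next I verify the required properties. Once $x$ is not too small, both $\mathbf{Q}(i)$ and $\mathbf{Q}(\sqrt{-3})$ lie in $\mathcal L$, so by construction $S$ contains primes that split in these two cyclotomic quadratic extensions; Proposition \ref{torsionprop} then guarantees that $\Gamma_{\mathcal O}^1$ is torsion free, so $M$ is a genuine manifold (for the remaining small values of $x$ the theorem is trivial from any one fixed arithmetic manifold). By Proposition \ref{ABHN} and our choice of $S$, no $L\in\mathcal L$ embeds in $B$, so the contrapositive of Proposition \ref{extensionbound} gives $\sys(M)\geq x$. For the area, the covolume formula for a principal arithmetic Fuchsian group over $\mathbf{Q}$ simplifies to $\tfrac{\pi}{3}\prod_{p\in S}(p-1)$, so
$$\vol(M)\ \leq\ \prod_{p\in S}p\ \leq\ \bigl(e^{2C(1+x)}\bigr)^{2e^{2(1+x)}+1}\ \leq\ \exp\!\bigl(2C(1+x)\bigl(2e^{2(1+x)}+1\bigr)\bigr),$$
and a routine comparison of exponents shows that this is bounded above by $4^{c\,e^{11x}}$ for a suitable effectively computable constant $c$.

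The main obstacle is the doubly exponential character of the bound, which appears intrinsic to this approach. One has to distinguish $B$ from roughly $e^{2x}$ different quadratic fields simultaneously, and the best unconditional bound available for a single smallest split prime in a quadratic field of discriminant $D$ is polynomial in $D$, so the product of the ramified primes is unavoidably doubly exponential in $x$. As noted in the introduction, even GRH would not yield a qualitative improvement. The $3$-manifold analogue is proven identically, replacing $\mathbf{Q}$ by a fixed imaginary quadratic field (for example $\mathbf{Q}(i)$) and using the Kleinian covolume formula; all estimates carry over with the degree $n=2$ absorbed into the constant.
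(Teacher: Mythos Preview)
Your proof is correct and follows the same overall strategy as the paper: take $k=\mathbf{Q}$, use the contrapositive of Proposition~\ref{extensionbound} to reduce to building a quaternion algebra $B/\mathbf{Q}$ into which no quadratic field of discriminant at most $e^{2(1+x)}$ embeds, achieve this by placing in $\Ram(B)$ a prime that splits in each such field, and verify torsion-freeness via Proposition~\ref{torsionprop}.

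The paper's implementation differs in two tactical choices worth noting. First, instead of selecting one split prime $p_L$ per field $L\in\mathcal{L}$, the paper simply ramifies \emph{every} prime up to a threshold $2c_1(e^{2+2x})^{c_2}$; this sidesteps the count of $|\mathcal{L}|$ and lets the area be controlled directly by Erd\H{o}s' inequality $\prod_{p\leq X}p<4^X$. Second, the paper invokes Linnik's theorem on the least prime in an arithmetic progression (with Xylouris' exponent $c_2<5.5$) rather than Wang's effective Chebotarev bound; this is precisely where the specific exponent $11=2\cdot 5.5$ in $e^{11x}$ originates. Your estimate $\exp\bigl(2C(1+x)(2e^{2(1+x)}+1)\bigr)$ is in fact asymptotically sharper (of shape $4^{c\,x e^{2x}}$ rather than $4^{c\,e^{11x}}$), though you only claim the weaker stated form. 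One small wrinkle: Theorem~\ref{effectiveCDT} as written carries a factor $(n\log N_S)^2$ that vanishes for $S=\emptyset$, so you should either cite the underlying Lagarias--Montgomery--Odlyzko bound directly or include a dummy prime in $S$ and absorb the resulting constant into $C$.
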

\begin{proof}
We may assume, without loss of generality, that $x\geq 1$. We will construct a quaternion algebra $B$ over $\bf Q$ which is unramified at the infinite place of $\bf Q$ and which has the property that no quadratic field extension $L/\bf Q$ with $\Delta_{L/\bf Q} \leq  e^{2+2x}$ embeds into $B$. If $\mathcal O$ is a maximal order of $B$ then the above proposition implies that ${\bf H}^2/\Gamma_{\mathcal O}^1$ has systole at least $x$.

Note that if $L$ is a quadratic field of discriminant $d$ then a prime $p$ not dividing $d$ splits in $L/\bf Q$ if and only if the Legendre symbol $\left(\frac{d}{p}\right) = 1$. By Linnik's theorem there exists such a prime $p$ with $p< c_1 d^{c_2}$ where $c_1$ and $c_2$ are constants that can of course be assumed to be greater than one. Because we wish to construct a quaternion algebra $B$ in which no quadratic field with discriminant less than $e^{2+2x}$ embeds, we will define $B$ to be the quaternion algebra in which every prime $p < c_1 \left( e^{2+2x}\right)^{c_2}$ ramifies. As $B$ must be unramified at the infinite place of $\bf Q$, there must be an even number of primes ramifying in $B$. Bertrand's postulate implies that there is always a prime in the interval $[x,2x]$ for all $x>1$. It follows that if there are an odd number of primes less than $c_1 \left( e^{2+2x}\right)^{c_2}$ then we may find an additional prime less than $2c_1 \left( e^{2+2x}\right)^{c_2}$. 

If $\mathcal O$ is a maximal order of $B$ then the area of the hyperbolic $2$-manifold ${\bf H}^2/\Gamma_{\mathcal O}^1$ is \[\frac{\pi}{3}\prod_{p\in\mathrm{Ram}(B)} (p-1)\leq \frac{\pi}{3} \prod_{p < 2c_1 \left( e^{2+2x}\right)^{c_2}} p <  4^{ce^{11x}} \] where $c$ is an  effectively computable constant. We note that the final inequality here follows from Erd{\H o}s' inequality $\prod_{p\leq x} p < 4^x$ (see \cite{E}) and Xylouris' proof \cite{X} that we may take $c_2< 5.5$.

To conclude our proof we need to show that ${\bf H}^2/\Gamma_{\mathcal O}^1$ is a closed hyperbolic $2$-manifold. Proposition \ref{torsionprop} implies that it suffices to show that $\Ram(B)$ contains primes which split in the quadratic extensions ${\bf Q}(\sqrt{-1})/{\bf Q}$ and ${\bf Q}(\sqrt{-3})/{\bf Q}$. The smallest prime which splits in both of these extensions is $13$. Since $B$ was defined to be the quaternion algebra in which every prime less than $c_1 \left( e^{2+2x}\right)^{c_2}$ ramifies, that $c_1,c_2> 1$ and $x\geq 1$ shows that $13\in\Ram(B)$.
\end{proof}

In the following theorem we adapt the proof of Theorem \ref{mainthm2} to the setting of hyperbolic $3$-manifolds.

\begin{theorem}\label{mainthm3}
For a given $x\in\mathbf R$ there exists a principal arithmetic hyperbolic $3$-manifold with systole at least $x$ and volume at most \[4^{ce^{2x}}\] for some positive, effectively computable constant $c$. 
\end{theorem}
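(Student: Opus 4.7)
The plan is to adapt the proof of Theorem \ref{mainthm2} to the Kleinian setting by working over the number field $k = \mathbf{Q}(i)$, which has a unique complex place and---crucially---no real places, so every quaternion algebra over $k$ automatically satisfies the required ramification condition at archimedean places. The overall strategy is to construct a quaternion algebra $B$ over $k$ that ramifies at enough primes of $k$ to prevent every ``short-systole'' quadratic extension from embedding into $B$, and then to bound the resulting principal arithmetic Kleinian volume.

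First I would apply Proposition \ref{extensionbound} in the $3$-manifold case with $n = [k:\mathbf{Q}] = 2$: any principal arithmetic hyperbolic $3$-manifold from $B$ with systole less than $x$ supplies a quadratic extension $L/k$ embedding into $B$ with $N(\Delta_{L/k}) \leq e^{2(2+x)} = e^{4+2x}$. By Proposition \ref{ABHN}, it is enough to arrange that for every such $L$, some prime of $k$ which splits in $L/k$ lies in $\Ram_f(B)$. To locate such split primes uniformly I would apply an effective Chebotarev density bound over $k$ to each degree-$2$ extension $L/k$ (for instance Theorem \ref{effectiveCDT} with $[\theta]$ the identity conjugacy class), obtaining a split prime of norm at most some polynomial bound $Y$ in $\Delta_L \leq 16\, e^{4+2x}$. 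Taking $B$ to be the quaternion algebra over $k$ ramified at all primes of $k$ of norm $\leq Y$ (and possibly one extra prime to keep $|\Ram(B)|$ even) then blocks every short quadratic extension, so any principal arithmetic manifold $\mathbf{H}^3/\Gamma_{\mathcal{O}}^1$ associated with $B$ has systole at least $x$.

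For the volume bound, the Kleinian volume formula makes $\vol(\mathbf{H}^3/\Gamma_{\mathcal{O}}^1)$ a fixed multiple of $\prod_{\mathfrak{p}\in\Ram_f(B)}(N(\mathfrak{p})-1)$. Grouping primes of $k$ above each rational prime $p \leq Y$ and noting that their joint contribution is at most $p^2$ (split case $(p-1)^2$, inert case $p^2-1$, ramified case $1$), we get
\[ \prod_{N(\mathfrak{p}) \leq Y}(N(\mathfrak{p})-1) \;\leq\; \Bigl(\prod_{p \leq Y} p\Bigr)^2 \;\leq\; 16^Y \]
via Erd\H{o}s' inequality $\prod_{p \leq x} p < 4^x$. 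Substituting the Chebotarev estimate for $Y$ in terms of $e^{4+2x}$ produces the desired double-exponential bound. Torsion-freeness follows from Proposition \ref{torsionprop}: $\mathbf{Q}(i)$ is primitive, the splitting condition on $k(\sqrt{-1}) = k$ is vacuous, and the condition on $k(\sqrt{-3})/k$ is automatic since $\Ram(B)$ contains every sufficiently small prime of $k$. The main obstacle is tracking the effective Chebotarev exponent with enough care that the resulting bound collapses into the clean form $4^{ce^{2x}}$ asserted in the statement; this is the step where the analysis is most delicate and where the effective constant $c$ ends up swallowing all the number-theoretic input.
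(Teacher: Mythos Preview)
Your outline has the right architecture, but there is a genuine gap in the key analytic step. You propose locating a prime of $k$ that splits in $L/k$ by invoking Theorem~\ref{effectiveCDT} (the Wang/LMO effective Chebotarev bound). That result gives a split prime of norm at most $\Delta_L^{C}$ for an absolute constant $C$ which is \emph{not} known to be $\le 1$; explicit versions have $C$ in the dozens. Substituting $\Delta_L \le 16\,e^{4+2x}$ then yields $Y \ll e^{2Cx}$, and after Erd\H{o}s' inequality the volume bound is $4^{c'\,e^{2Cx}}$, not $4^{c\,e^{2x}}$. Since $e^{2Cx}/e^{2x}\to\infty$ for $C>1$, the extra exponent cannot be absorbed into the constant $c$, so the method as written does not reach the stated bound. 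The paper obtains the precise exponent $2$ by a different route: it passes to the Galois closure $\widehat{L}/\mathbf{Q}$ (which is abelian of degree $\le 4$ or a $D_4$-octic) and applies the specialized results of Pollack and of Ge--Milinovich--Pollack giving a rational prime $p\ll \Delta_{\widehat{L}/\mathbf{Q}}^{1/2}$ that splits completely in $\widehat{L}$. It is this square-root saving, unavailable from generic Chebotarev, that collapses the inner exponential to $e^{2x}$.

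There is also a minor issue with the torsion argument over $k=\mathbf{Q}(i)$. Proposition~\ref{torsionprop}'s proof asserts that the only cyclotomic quadratic extensions of a primitive $k$ are $k(\sqrt{-1})$ and $k(\sqrt{-3})$, but for $k=\mathbf{Q}(i)$ one also has the quadratic extension $k(\zeta_8)=\mathbf{Q}(i,\sqrt{2})$, so order-$4$ torsion in $\Gamma_{\mathcal O}^1$ is not automatically excluded by your two checks. The paper sidesteps this by working over $k=\mathbf{Q}(\sqrt{-2})$, where $k(\zeta_8)=k(\sqrt{-1})$ and no new cyclotomic quadratic extensions arise. Over $\mathbf{Q}(i)$ you would need to add the condition that some $\mathfrak{p}\in\Ram(B)$ splits in $k(\sqrt{2})/k$; this is easily arranged, but it is not vacuous as you claimed.
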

\begin{proof}
We may assume, without loss of generality, that $x\geq 1$. Let $k$ be an imaginary quadratic field. Given a quadratic extension $L/k$ we will denote by $\widehat{L}$ the Galois closure of $L$ over $\bf Q$. Note that $k$ being imaginary quadratic implies that either $L=\widehat{L}$ or else $\widehat{L}$ is a Galois extension of $\bf Q$ of degree $8$ with Galois group isomorphic to the dihedral group $D_4$ (see Section 2.2 of \cite{CDO}). We will construct a quaternion algebra $B$ over $k$ which has the property that no quadratic field extension $L/k$ with $N_{k/\bf Q}(\Delta_{\widehat{L}/k}) \leq  \left(e^{2+2x}\right)^2$ embeds into $B$. This will imply, by the relative discriminant formula $\Delta_{\widehat L/k}=\Delta_{L/k}^{[\widehat L : L]} N_{L/k}(\Delta_{\widehat L/L})$,  that no quadratic field extension $L/k$ with $N_{k/\bf Q}(\Delta_{L/k}) \leq e^{2+2x}$ embeds into $B$. If $\mathcal O$ is a maximal order of $B$ then Proposition \ref{extensionbound} will imply that ${\bf H}^3/\Gamma_{\mathcal O}^1$ has systole at least $x$.

Let $L$ be a quadratic field extension of $k$. The work of Pollack \cite{P} (in the case that $L/\bf Q$ is Galois) or Ge, Milinovich and Pollack \cite[Example 1]{GMP} (in the case that $L/\bf Q$ is a $D_4$-quartic field) imply that there exists a prime $p$ splitting completely in $\widehat L/\bf Q$ with $p< c_0 \Delta_{\widehat L/\bf Q}^{1/2}$ for some constant $c_0>1$. Because we wish to construct a quaternion algebra in which no quadratic field extension $L/k$ with $N_{k/\bf Q}(\Delta_{\widehat{L}/k}) \leq  \left(e^{2+2x}\right)^2$ embeds, we recall that a prime $p$ splits completely in $L/\bf Q$ if and only if it splits completely in $\widehat L/\bf Q$ and define the algebra $B$ by \[\mathrm{Ram}(B) = \{ \p\subset \mathcal O_k : N_{k/\bf Q}(\p) \leq c_0 \Delta_{k/\bf Q}^2 e^{2+2x} \}.\] To see that $B$ has the desired property, let $L$ be a quadratic extension of $k$ which embeds into $B$ and suppose that $N_{k/\bf Q}(\Delta_{\widehat{L}/k}) \leq  \left(e^{2+2x}\right)^2$. By what we've said above, there exists a prime $p$ which splits completely in $\widehat L/\bf Q$ such that $p <  c_0 \Delta_{\widehat L/\bf Q}^{1/2}$. Let $\p$ be a prime of $k$ lying above $p$. Then $\p$ splits completely in $\widehat L/k$, and hence in $L/k$, and satisfies, by the relative discriminant formula, \[N_{k/\bf Q}(\p) = p  < c_0 \Delta_{\widehat L/\bf Q}^{1/2} = c_0 \Delta_{k/\bf Q}^{[\widehat L:k]/2}N_{k/\bf Q}(\Delta_{\widehat L/k})^{1/2} \leq c_0\Delta_{k/\bf Q}^2 e^{2+2x}.\] Therefore we have shown the existence of a prime $\p$ of $k$ which splits in $L/k$ yet ramifies in $B$. This violates Proposition \ref{ABHN}, hence $B$ has the desired property.

Let $C=\frac{|\Delta_{k/\bf Q}|^{\frac{3}{2}}\zeta_k(2)}{4\pi^2}$. If $\mathcal O$ is a maximal order of $B$ then the volume of ${\bf H}^3/\Gamma_{\mathcal O}^1$ is 
\begin{align*}
\vol({\bf H}^3/\Gamma_{\mathcal O}^1) &= C\prod_{\p \in\mathrm{Ram}(B)} (N(\p)-1) \\
&\leq C\prod_{N(\p) \leq c_0 \Delta_{k/\bf Q}^2 e^{2+2x}} N(\p) \\
&\leq C\prod_{p \leq c_0 \Delta_{k/\bf Q}^2 e^{2+2x}} p^2 \\
&\leq C \left( \prod_{p \leq c_0 \Delta_{k/\bf Q}^2 e^{2+2x}} p\right)^2 \\
&< C 4^{2c_0 \Delta_{k/\bf Q}^2 e^{2+2x}}
\end{align*}
where the last inequality follows from Erd{\H o}s' inequality $\prod_{p\leq x} p < 4^x$ (see \cite{E}).

It remains only to show that we may choose $B$ in such a way that ${\bf H}^3/\Gamma_{\mathcal O}^1$ is a hyperbolic $3$-manifold; that is, that $\Gamma_{\mathcal O}^1$ is torsion-free. Let $k$ be the imaginary quadratic field ${\bf Q}(\sqrt{-2})$. By Propsoition \ref{torsionprop} it suffices to show that $\Ram(B)$ contains primes which split in both ${\bf Q}(\sqrt{-1},\sqrt{-2})/{\bf Q}(\sqrt{-2})$ and ${\bf Q}(\sqrt{-2},\sqrt{-3})/{\bf Q}(\sqrt{-2})$. An easy calculation in SAGE shows that the prime $97$ splits completely in ${\bf Q}(\sqrt{-1},\sqrt{-2})/{\bf Q}$ and ${\bf Q}(\sqrt{-2},\sqrt{-3})/{\bf Q}$ Let $\p$ be a prime of ${\bf Q}(\sqrt{-2})$ lying above $97$. Then $\p$ splits in both ${\bf Q}(\sqrt{-1},\sqrt{-2})/{\bf Q}(\sqrt{-2})$ and ${\bf Q}(\sqrt{-2},\sqrt{-3})/{\bf Q}(\sqrt{-2})$ and has norm $N(\p)=97$. Since $|\Delta_{{\bf Q}(\sqrt{-2})/\bf Q}|=8$ and $x\geq 1$, the definition of $B$ implies that $\p\in\Ram(B)$. This shows that ${\bf H}^3/\Gamma_{\mathcal O}^1$ is a hyperbolic $3$-manifold. 

Finally, we compute (again, using SAGE) that $C 4^{2c_0 \Delta_{k/\bf Q}^2 e^{2+2x}} \leq 4^{c_010^3 e^{2x}} $. Letting $c=10^3c_0$, the theorem follows.
\end{proof}

\subsection{Can the volume bounds in Theorems \ref{mainthm2} and \ref{mainthm3} be improved upon?}\label{subsectiongrh}

While it seems likely that the volume bounds we obtained in Theorems \ref{mainthm2} and \ref{mainthm3} are far from the truth, improving them seems to be quite difficult. Consider, for example, Theorem \ref{mainthm2}. In light of Proposition \ref{extensionbound} we had to construct a quaternion algebra $B$ over $\bf Q$ in which no quadratic field with discriminant less than $e^{2+2x}$ embeds. The approach we adopted was to construct such an algebra by ensuring that for every quadratic field with discriminant less than $e^{2+2x}$ there was a prime $p\in\Ram(B)$ which split in the quadratic field. The area of a principal arithmetic hyperbolic $2$-manifold arising from such an algebra is essentially \[\prod_{p \leq c\left(e^{11x}\right)} (p-1)\] for some positive constant $c$, which led to the theorem's large area bound.

It would be natural to argue that part of the inefficiency in the aforementioned approach was to choose a {\it separate} prime splitting in each quadratic field of discriminant less than $e^{2+2x}$. For example, while the prime $5$ is the smallest prime splitting in ${\bf Q}(\sqrt{-1})/{\bf Q}$ and the prime $7$ is the smallest prime splitting in ${\bf Q}(\sqrt{-3})/{\bf Q}$, the prime $13$ splits in both extensions. It therefore would have been better to have $\Ram(B)$ contain $13$ (a contribution of $12=13-1$ to the area of the corresponding hyperbolic $2$-manifold) rather than the primes $5$ and $7$ (which together contribute $24=(5-1)(7-1)$ to the area). Taking this line of reasoning even further, perhaps it would be best to find a single prime $p$ which splits in {\it every} quadratic field of discriminant less than $e^{2+2x}$ and define $B$ by declaring $\Ram(B)=\{2,p\}$. Such a prime $p$ can be found using effective versions of the Chebotarev density theorem. In \cite{LO} it is shown that if $K/\bf Q$ is a Galois extension of number fields and $\Delta$ is the absolute value of the discriminant of $K$ then assuming the Generalized Riemann Hypothesis there exists a prime $p$ which splits completely in $K/\bf Q$ and satisfies $p<c\log^2(2\Delta)$ for some positive constant $c$. Unfortunately such an approach still yields a volume bound of the form $c_1^{c_2^x}$, as in our case we would need to define $K$ as the compositum of all quadratic fields with discriminant less than $e^{2+2x}$. The issue is that the discriminant of such a compositum is astronomical. Schmal \cite{Sch} has shown that if $a_1,\dots, a_k$ are square-free integers then the discriminant of ${\bf Q}(\sqrt{a_1},\dots, \sqrt{a_m})$ is \[\Delta = \left( 2^r \mathrm{rad}(a_1\cdots a_m)\right)^{2^{m-1}}\] where $r\in\{0,2,3\}$ and $\mathrm{rad}(n)$ denotes the radical of $n$. In our case $m$ is a constant multiple of $e^{2+2x}$, hence $\log(2\Delta)$ (as well as the area of our hyperbolic $2$-manifold) will contain a term with order of magnitude roughly $2^{e^{2+2x}}$. This is an improvement on the bound from Theorem \ref{mainthm2}, but only a minor one and is conditional on the Generalized Riemann Hypothesis.

Ultimately it seems as though the reason that the bounds in Theorems \ref{mainthm2} and \ref{mainthm3} are as big as they are is the $e^{2(n+x)}$ appearing in Proposition \ref{extensionbound}. That this term is exponential in $x$ is a consequence of Silverman's lower bound for the height of an algebraic integer in terms of the relative discriminant of the field extension it generates (see \cite{S}). While one might naively hope that Silverman's bound could be improved upon enough to result in substantially better volume bounds, the discussion of Pierce, Turnage-Butterbaugh and Wood in Section 8 of \cite{PTW} make it clear that in fact, Silverman's bound is close to the truth and likely cannot be improved upon (apart from constants).

In light of the above discussion, improving the bounds in Theorems \ref{mainthm2} and \ref{mainthm3}  seems like an interesting, though difficult, problem.


\section{A program for determining surfaces and $3$-manifolds of minimal volume with systole bounded below}

Recall that the coarea of a principal arithmetic Fuchsian group arising from a quaternion algebra $B$ over the field $\mathbf Q$ of rational numbers is given by the formula
\[\frac{\pi}{3}\prod_{p\in \Ram(B)} (p-1).\] 

In this section we will determine, for certain small values of $\ell>0$, the minimal coarea of a principal arithmetic Fuchsian group defined over $\mathbf Q$ whose associated hyperbolic $2$-manifold has systole greater than $\ell$. In order to accomplish this we wrote a program in SAGE \cite{sage} that takes as input a lower bound $\ell$ on the systole length, and returns the ramification set of a quaternion algebra over $\mathbf Q$ giving rise to a principal arithmetic Fuchsian group of smallest coarea amongst those whose associated surface has systole at least $\ell$. (We note that such a quaternion algebra will not, in general, be unique.) 

The general outline of the program is as follows.

\begin{enumerate}
\item For a given systole bound $\ell$, we compile a list of all square-free integers $d$ such that the regulator $\mathrm{Reg}_d $ of $\mathbf{Q}(\sqrt{d})$ is less than $\ell$. This is done by employing the elementary estimate (see for instance \cite[p. 2]{JLW}) \[\mathrm{Reg}_d \geq \log\left( \frac{1}{2}(\sqrt{d-4}+\sqrt{d})\right).\] If $B$ is a quaternion algebra over $\mathbf Q$ into which $\mathbf Q(\sqrt{d})$ embeds, then every principal arithmetic group arising from $B$ will contain a geodesic whose length is the regulator of $\mathbf{Q}(\sqrt{d})$, hence we must determine the ``smallest'' quaternion algebra over $\mathbf Q$ which does not admit an embedding of $\mathbf Q(\sqrt d)$ for any of the $d$ computed above.

\item Find a quaternion algebra $B'$ over $\mathbf Q$, unramified at the infinite place of $\mathbf Q$, into which none of the real quadratic fields found in the previous step embed. This is done by using the fact that $\mathbf Q(\sqrt{d})$ embeds into a quaternion algebra over $\mathbf Q$ if and only if no prime ramifying in the quaternion algebra splits in $\mathbf Q(\sqrt d)$, along with the criterion for a prime to split in a quadratic field. In practice we were able to find a quaternion algebra $B'$ by considering all subsets of the first $25$ prime numbers with cardinality $2,4,$ or $6$ as the potential ramification sets of $B'$, and then checking to see if such a quaternion algebra admitted an embedding of any of the $\mathbf Q(\sqrt d)$. If $\Gamma$ is a principal arithmetic group arising from $B'$, then the systole of $\mathbf{H}^2/\Gamma$ is greater than $\ell$. Fix one such $\Gamma$, and let $A$ be the coarea of $\Gamma$. The coarea formula shows that $A$ does not depend on the choice of maximal order used to construct $\Gamma$.

\item Let $B$ be a quaternion algebra over $\mathbf Q$, unramified at the infinite place of $\mathbf Q$, into which none of the $\mathbf Q(\sqrt{d})$ embed and whose principal arithmetic subgroups have minimal coarea amongst those of all quaternion algebras with the aforementioned property. In this step we determine an upper bound for the cardinality of $\Ram(B)$ by making use of the coarea formula inequality \[\left(\frac{\pi}{3}\prod_{p\in \Ram(B)} (p-1)\right) < A,\] where $A$ is as in (2).

\item Check all even cardinality sets of primes $S=\{p_1,\dots, p_{2k}\}$ which satisfy \begin{equation}\label{areaboundramsetiss}\frac{\pi}{3}\prod_{p\in S} (p-1) < A\end{equation} and check to see if the quaternion algebra whose ramification set is equal to $S$ admits embeddings of any of the quadratic fields $\mathbf Q(\sqrt d)$. The set $S$ for which the left hand side of (\ref{areaboundramsetiss}) is minimal is our desired ramification set. 
\end{enumerate}

The results of our computations are listed in the table below. We note that rather than listing the optimal surface areas, which are of the form \[\frac{\pi}{3}\prod_{p\in \Ram(B)} (p-1)\] and hence irrational,  we have instead listed the optimal {\it area factors} $\prod_{p\in \Ram(B)} (p-1)$, which are integral.

\vspace{1em}

\begin{center}
\begin{tabular}{| c| c |c|}
\hline
 Lower Bound for Systole Length $\ell$ & $\Ram(B)$ & Optimal Area Factor\\ 
 \hline
0.5 &  \{2,11\} & 10 \\
\hline
 1 &  \{2,31\} & 30 \\
 \hline
 1.25& \{3,31\} & 60\\
 \hline
 1.5& \{2,3,7,11\}& 120\\
 \hline
 1.75&\{3,5,7,11\}& 480\\
 \hline
 2 & \{3,5,7,11\}&480\\
 \hline
 2.25 &  \{2,3,13,41\} &960\\
 \hline
 2.5&  \{2,3,7,17\}& 2240\\
 \hline
 2.75&  \{2,3,5,7,11,13\}&5760\\
 \hline
 3&  \{2,7,29,37\}&6048\\
 \hline
 3.25 &  \{ 2,3,5,7,11,67\}&31680\\
 \hline
 3.5 &\{2,3,5,11,17,47\}&58880\\
 \hline
 3.75&\{2,3,5,19,23,27\}&114048\\
 \hline
 4 & \{2,3,19,23,31,37\}&855360\\
 \hline
 4.25& \{2,3,7,37,61,73\}&1866240\\
 \hline
 4.5&\{2,3,7,37,61,73\}&1866240\\
 \hline
 4.75&\{2,7,11,23,109,173\}&24520320\\
 \hline
 5&\{2,3,5,7,11,13,53,173\}&51517440\\
 \hline
\end{tabular}
\end{center}

\vspace{1em}

Our approach for 3-manifolds was essentially the same as our approach for surfaces, with a few notable differences:

\begin{enumerate}
    \item In the case of surfaces arising from quaternion algebras over $\mathbf{Q}$, closed geodesics are associated with quadratic fields $\mathbf{Q}(\sqrt{d})$. Such a quadratic field embeds into a quaternion algebra $B$ over $\mathbf Q$ if and only if no rational prime which ramifies in $B$ splits in $\mathbf{Q}(\sqrt{d})/\mathbf Q$. Moreover, whether a prime $p$ splits in the extension $\mathbf Q(\sqrt d)/\mathbf Q$ is easily checked; one simply needs to compute the value of the Legendre symbol $\binom{d}{p}$. For hyperbolic $3$-manifolds arising from quaternion algebras over $\mathbf Q(i)$, by contrast, one must check whether  a prime ideal in $\mathbf{Q}(i)$ splits in a quadratic extension of $\mathbf Q(i)$ (which is a field of degree $4$ over $\mathbf Q$). In practice we have found that checking this property takes significantly longer than computing a Legendre symbol.

\item In the case of surfaces, we were able to determine exactly which quadratic fields $\mathbf{Q}(\sqrt{d})$ give rise to a closed geodesic with length less than $\ell$. In the case of $3$-manifolds, we instead have to work with a bound 
$\Delta_{F/\mathbf{Q}}\leq 16e^{2(\ell+2)}$
on the discriminant $\Delta_{F/\mathbf{Q}}$ of a quadratic field extension $F$ of $\mathbf Q(i)$ that may give rise to a closed geodesic with length less than $\ell$. In light of this, there are many more degree $4$ fields to consider in the $3$-manifold case than there were quadratic fields to consider in the surface case, For example, for a systole bound of $\ell=2.7$, in the surface case we had to find quaternion algebras over $\mathbf Q$ into which $18$ quadratic fields did not embed. The number of quadratic extensions of $F$ of $\mathbf Q(i)$ that need be considered in the $3$-manifold case (for the same systole bound of $\ell=2.7$) is $1604$.

\item Because the computations for $3$-manifolds were so much more intensive than were the ones for surfaces, the $3$-manifold examples listed in the table below are no longer optimal (in the sense of having minimal volume) but simply represent examples of arithmetic hyperbolic $3$-manifolds arising from quaternion algebras over $\mathbf Q(i)$ which have systole at least $\ell$. 
\end{enumerate}

In the table below we list, for a systole bound $\ell$, a quaternion algebra $B$ over $\mathbf Q(i)$ which has the property that for any maximal order $\mathcal O$ of $B$, the hyperbolic $3$-manifold $\mathbf{H}^3/\Gamma_\mathcal O^1$ has systole at least $\ell$. We characterize $B$ by means of its ramification set $\Ram(B)$. More explicitly, we list the absolute norms of the prime ideals in $\Ram(B)$. Although in general there may be multiple prime ideals in a number field having the same absolute norm, we have found that the compactness of this presentation more than makes up for its slight ambiguity. In the final column we give the volume of the manifold $\mathbf{H}^3/\Gamma_\mathcal O^1$. (This volume does not depend on the maximal order $\mathcal O$.) 

\newpage

\begin{center}
\begin{tabular}{| c| c| c|}
\hline
\textbf{Lower Bound for Systole Length} $\ell$&$\Ram(B)$&\textbf{Volume}\\
\hline
1.0&\{2,5,9,13\}&117.24\\
\hline
1.1&\{2,5,5,9,13,13\}&5627.69\\
\hline
1.2&\{2,5,5,9,13,13\}&5627.69\\
\hline
1.3&\{2,5,5,9,13,29\}&13131.28\\
\hline
1.4&\{2,5,5,9,13,29\}&13131.28\\
\hline
1.5&\{2,5,5,13,17,61\}&56276.93\\
\hline
1.6&\{2,5,5,17, 29,53\}&78787.69\\
\hline
1.7&\{2,5,5,29, 41, 73\}&393938.4\\
\hline
1.8&\{2,9,13,17,29,53\}&682826.70\\
\hline
1.9&\{2,9,13,17,29,53\}&682826.70\\
\hline
2.0&\{2,5,5,9,17,41,41,49\}&48022976.94\\
\hline
2.1&\{2,5,5,9,17,41,41,49\}&48022976.94\\
\hline
2.2&\{2,5,9, 17,37, 41,41,49\}&$4.3221\times 10^8$\\
\hline
2.3&\{2,5,13,37,37,41,41,49\}&$1.4587\times 10^9$\\
\hline
2.4&\{2,5,13,37,37,41,41,49\}&$1.4587\times 10^9$\\
\hline
2.5&\{5, 17,17,29,37,41,41,49\}&$1.6943\times 10^{10}$\\
\hline
2.6&\{2,5,5,9,13,17,29,41,49,53\}&$2.0977\times10^{10}$\\
\hline
2.7&\{2,5,5,9,13,17,29,41,49,53\}&$2.0977\times10^{10}$\\
\hline
2.8&\{2,5,5,9,13,17,29,53,61,61\}&$3.9331\times 10^{10}$\\
\hline
2.9&\{2,5,5,9,13,17,29,53,61,61\}&$3.9331\times 10^{10}$\\
\hline
3.0&\{2,5,5,9,17,49,73,89,89,97\}&$1.6066\times 10^{12}$\\
\hline
\end{tabular}
\end{center}

\end{document}